\documentclass{amsart}

\usepackage{lmodern}
\usepackage[T1]{fontenc}
\usepackage[utf8]{inputenc}
\usepackage{hyperref}
\hypersetup{
  pdfauthor={Zijia Li, Hans-Peter Schröcker},
  pdftitle={Factorization Results for Left Polynomials in Some Associative Real Algebras: State of the Art, Applications, and Open Questions},
  pdfkeywords={},
  hidelinks,
}

\setlength{\textwidth}{\paperwidth}
\addtolength{\textwidth}{-55mm}
\calclayout
\usepackage{amsmath,amssymb}
\usepackage{mathtools}
\usepackage{enumitem}

\usepackage{algorithm,algorithmic}

\newcommand{\C}{\mathbb{C}}
\newcommand{\D}{\mathbb{D}}
\renewcommand{\DH}{\mathbb{DH}}
\renewcommand{\H}{\mathbb{H}}
\newcommand{\HH}{\mathbb{S}}
\newcommand{\R}{\mathbb{R}}
\newcommand{\adef}{\leftarrow}
\newcommand{\tp}{\intercal}

\newcommand{\Clifford}[1]{C\kern-0.12em\ell_{(#1)}}

\newcommand{\evenClifford}[1]{C\kern-0.12em\ell^+_{(#1)}}
\newcommand{\Spin}[1]{\mathrm{Spin}_{(#1)}}
\newcommand{\hi}{\mathbf{i}_s}
\newcommand{\hj}{\mathbf{j}_s}
\newcommand{\hk}{\mathbf{k}_s}
\newcommand{\qi}{\mathbf{i}}
\newcommand{\qj}{\mathbf{j}}
\newcommand{\qk}{\mathbf{k}}
\newcommand{\eps}{\varepsilon}
\newcommand{\SO}[1][3]{\mathrm{SO}(#1)}
\newcommand{\SE}[1][3]{\mathrm{SE}(#1)}
\newcommand{\ci}{\mathrm{i}}
\newcommand{\Cj}[1]{{#1}^\ast}
\newcommand{\sandwich}[1]{\sigma_{#1}}
\newcommand{\lconcat}[2]{(#1,#2)}
\newcommand{\NQ}{\mathcal{N}}

\DeclareMathOperator{\mrpf}{mrpf}

\DeclareMathOperator{\lquo}{lquo}
\DeclareMathOperator{\lrem}{lrem}
\DeclareMathOperator{\rquo}{rquo}
\DeclareMathOperator{\rrem}{rrem}
\DeclareMathOperator{\czero}{czero}
\DeclareMathOperator{\gfactor}{gfactor}

\newtheorem{thm}{Theorem}
\newtheorem{lem}{Lemma}

\newtheorem{cor}{Corollary}
\theoremstyle{definition}
\newtheorem{defn}{Definition}
\theoremstyle{remark}
\newtheorem{rmk}{Remark}
\newtheorem{example}{Example}

\title[Factorization Results for Left Polynomials]{Factorization Results for Left Polynomials\\in Some Associative Real Algebras: State of the Art,\\Applications, and Open Questions}
\date{\today}

\author{Zijia Li}
\address[Zijia Li]{Institute of Discrete Mathematics and Geometry, Vienna University of Technology, Wiedner Hauptstrasse 8-10/104, 1040 Vienna, Austria}
\email{zijia.li@tuwien.ac.at}

\author{Daniel F. Scharler}

\author{Hans-Peter Schröcker}
\address[Daniel Scharler, Hans-Peter Schröcker]{Unit Geometry and CAD, University of Innsbruck, Technikerstr.~13, 6020 Innsbruck, Austria}
\urladdr{https://geometrie.uibk.ac.at/}
\email{\{daniel.scharler\}\{hans-peter.schroecker\}@uibk.ac.at}

\keywords{}
\subjclass[2010]{
  12D05,   15A66,   16S36,   30C15,   70B10    }

\begin{document}

\begin{abstract}
    We discuss existence of factorizations with linear factors for (left)
  polynomials over certain associative real involutive algebras, most notably
  over Clifford algebras. Because of their relevance to kinematics and mechanism
  science, we put particular emphasis on factorization results for quaternion,
  dual quaternion and split quaternion polynomials. A general algorithm ensures
  existence of a factorization for generic polynomials over division rings but
  we also consider factorizations for non-division rings. We explain the current
  state of the art, present some new results and provide examples and counter
  examples.
 \end{abstract}

\maketitle

\section{Introduction}
\label{sec:introduction}

The factorization theory of polynomials over division rings has been developed
half a century ago in \cite{niven41,gordon65}. It gained new attention in recent
years because relations to mechanism science were unveiled
\cite{hegedus13,li15,li15darboux,li2016spatial,gallet16,li17,scharler17,li18,rad18}.
Quaternion polynomials parameterize rational spherical motions. For describing
motions in $\SE[2]$ or $\SE[3]$ dual quaternion polynomials are necessary. Their
factorization theory turned out to be more complicated and, arguably, more
interesting as well.

In this contribution we summarize the current state of the art in the
factorization theory of dual quaternion polynomials but we also demonstrate that
many results hold for polynomials over certain more general finite-dimensional
associative real algebras, most notably finite-dimensional Clifford algebras.
Throughout this paper we illustrate the general theory by three prototypical
examples with significantly different properties: The quaternions $\H$, the dual
quaternions $\DH$, and the split quaternions $\HH$ that can model planar
hyperbolic kinematics. A fundamental factorization algorithm, based on the
factorization of real polynomials, works for generic polynomials over these
algebras.

In Section~\ref{sec:polynomial-factorization} we recall some general results on
the factorization of polynomials over rings, in Section~\ref{sec:existence} we
present theoretical and algorithmic results (Theorem~\ref{th:factorization} and
Algorithm~\ref{alg:factorization}) on polynomial factorization over quaternions.
This is followed by some factorization examples that illustrate the intricacies
of polynomial factorization over skew rings
(Section~\ref{sec:factorization-examples}). There exist polynomials with no,
many or even infinitely many factorizations. Some of these factorizations can be
computed by means of Algorithm~\ref{alg:factorization} --- even if its general
applicability is limited to division algebras.
Section~\ref{sec:mechanism-science} explains relations of polynomial
factorization over quaternion rings to kinematics and mechanism science while
Section~\ref{sec:more-results} features a collection of known and new results
that allow to compute factorizations or to at least guarantee their existence.
The new results of this part include statements on factorizability of quadratic
split quaternion polynomials or unbounded motion polynomials.

\section{Polynomial Factorization over Rings}
\label{sec:polynomial-factorization}

Consider a possibly non-commutative ring $R$ and a polynomial $C = \sum_{i=0}^d
c_it^i$ in one indeterminate $t$ with coefficients $c_0$, $c_1$, \ldots, $c_d
\in R$. We define the product of two polynomials $A = \sum_{i=0}^d a_it^i$ and
$B = \sum_{i=0}^e b_it^i$ as
\begin{equation*}
  AB \coloneqq \sum_{i=0}^{d+e} c_i t^i
  \quad\text{where}\quad
  c_i = \sum_{j+k=i}a_jb_k.
\end{equation*}
This is really just one possible multiplication rule among others \cite{ore33}.
It is suitable for our purpose because in applications the indeterminate $t$
typically serves as a real parameter and $R$ is an associative real algebra.

We consistently write coefficients to the left of the indeterminate and hence
speak of \emph{left polynomials.} With addition defined in the usual way as $A +
B \coloneqq \sum_{i=0}^{\max\{d,e\}} (a_i+b_i)t^i$, the set $R[t]$ of left
polynomials in $t$ over $R$ is a ring. The \emph{evaluation $C(r)$} of $C$ at $r
\in R$ is defined as
\begin{equation*}
  C(r) \coloneqq \sum_{i=0}^d c_ir^i.
\end{equation*}
Besides this ``right evaluation'' there is also a ``left evaluation''
$\sum_{i=0}^d r^ic_i$ which gives rise to a completely symmetric theory. A ring
element $r$ is called a \emph{right zero} of $C$ if $C(r) = 0$ and a \emph{left
  zero} if its left evaluation at $r$ vanishes. Since left evaluation and
left zeros are not important for this paper, we introduce no special notation
for them. We will often simple speak of ``evaluation'' and ``zeros'' instead of
``right evaluation'' and ``right zeros''.

Evaluation of $C$ at a fixed value $r \in R$ is not generally a ring
homomorphism. For a counter example, take two non-commuting elements $r$, $q \in
R$ and set $C \coloneqq C_r C_q$ where $C_r \coloneqq t - r$ and $C_q \coloneqq
t - q$. We then have
\begin{equation*}
  C(r) = r^2 - (r + q)r + rq = rq - qr \neq 0
  \quad\text{but}\quad
  C_r(r) C_q(r) = 0
\end{equation*}
because $C_r(r) = 0$. However, we do have
\begin{equation*}
  C(q) = q^2 - (r + q)q + rq = 0.
\end{equation*}
This is no coincidence but consequence of Theorem~\ref{th:zero-factor} below.
Note that evaluation at $r$ is at least additive: For all $F$, $G \in R[t]$ we
have $(F+G)(r) = F(r) + G(r)$.

A polynomial $F$ is called a \emph{right factor} of $C$ if there exists a
polynomial $Q$ such that $C = QF$. Similarly, it is called a \emph{left factor}
if $C = FQ$. Polynomial division is possible in $R[t]$ but it is necessary to
distinguish between a left and a right version and to take into account
non-invertible coefficients.

\begin{thm}
  \label{th:polynomial-division}
  Given polynomials $F$, $G \in R[t]$ such that the leading coefficient of $G$
  is invertible, there exist unique polynomials $Q_\ell$, $Q_r$, $S_\ell$, and
  $S_r$ such that $\deg S_\ell < \deg G$, $\deg S_r < \deg G$ and $F = Q_\ell G
  + S_\ell = GQ_r + S_r$.
\end{thm}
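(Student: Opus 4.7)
The plan is to prove both existence claims by induction on $\deg F$ and to derive uniqueness from a degree comparison that exploits the invertibility of the leading coefficient of $G$.

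For the existence of the right-division form $F = Q_\ell G + S_\ell$, I would induct on $d \coloneqq \deg F$. If $d < \deg G$, the base case is settled by $Q_\ell \coloneqq 0$, $S_\ell \coloneqq F$. For the inductive step, let $e \coloneqq \deg G$, let $f$ and $g$ denote the leading coefficients of $F$ and $G$, and form $F_1 \coloneqq F - f g^{-1} t^{d-e} G$. Because $t$ formally commutes with coefficients in the product rule recalled in Section~\ref{sec:polynomial-factorization}, the leading term of $f g^{-1} t^{d-e} G$ is $f g^{-1} g\, t^d = f t^d$, so $\deg F_1 < d$. The inductive hypothesis supplies $F_1 = Q_1 G + S_\ell$ with $\deg S_\ell < e$, whence $Q_\ell \coloneqq Q_1 + f g^{-1} t^{d-e}$ completes the construction. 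The left-division form $F = G Q_r + S_r$ is produced identically, with $f g^{-1} t^{d-e} G$ replaced by $G \cdot g^{-1} f\, t^{d-e}$; the placement of $g^{-1}$ on the opposite side of $f$ is precisely what ensures cancellation of the leading term in this case.

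For uniqueness, I would suppose $F = Q G + S = Q' G + S'$ with both remainders of degree less than $e$. Subtracting yields $(Q - Q') G = S' - S$. If $Q \neq Q'$, the leading coefficient of the left-hand side is the product of the leading coefficient of $Q - Q'$ with $g$, and this product is nonzero because $g$ is invertible and hence not a zero divisor; consequently the degree of the left-hand side is at least $e$, contradicting $\deg(S' - S) < e$. The symmetric argument disposes of the left-division case.

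No genuine obstacle is expected. The only point demanding care is the interplay between the non-commutative coefficients and the rule that $t$ commutes with them in the product formula, which dictates on which side $g^{-1}$ must appear in order to clear the leading term. Invertibility of $g$ is used twice: to form $g^{-1}$ in the existence step, and to exclude annihilation of leading coefficients under right- or left-multiplication by $G$ in the uniqueness step.
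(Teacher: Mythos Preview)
Your proof is correct and follows essentially the same approach as the paper: the paper establishes existence by referring to the (iterative) Euclidean Algorithm~\ref{alg:euclidean}, of which your induction is simply the recursive formulation, and your uniqueness argument via the degree contradiction $(Q-Q')G = S'-S$ is identical to the paper's.
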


\begin{defn}
  The polynomials $Q_\ell$, $Q_r$ in Theorem~\ref{th:polynomial-division} are
  called \emph{left} and \emph{right quotient,} respectively. The polynomials
  $S_\ell$ and $S_r$ are called \emph{left} and \emph{right remainder.} We
  denote them by $Q_\ell = \lquo(F,G)$, $Q_r = \rquo(F,G)$, $S_\ell =
  \lrem(F,G)$, and $S_r = \rrem(F,G)$, respectively.
\end{defn}

\begin{proof}[Proof of Theorem~\ref{th:polynomial-division}]
  Standard proofs for existence also work in this case. We do not repeat them
  here but instead refer to Algorithm~\ref{alg:euclidean}, the Euclidean
  Algorithm for left polynomial division. Its correctness is easy to see, the
  ``right'' version is explained in comments.

  As to uniqueness, assume that there are two left quotients and remainders,
  that is, $F = Q_1G + S_1 = Q_2G + S_2$. This implies
  \begin{equation*}
    (Q_1-Q_2)G = S_2 - S_1.
  \end{equation*}
  Now if $Q_1 \neq Q_2$, the polynomial on the left-hand side has degree greater
  than or equal to $\deg G$ because the leading coefficient of $G$ is
  invertible. But the degree on the right-hand side is strictly smaller. Hence
  $Q_1 = Q_2$ and also $S_1 = S_2$. In the same way we can prove uniqueness of
  right quotient and remainder.
\end{proof}

\begin{rmk}
  \label{rem:1}
  If the leading coefficient of $G$ fails to be invertible, neither existence
  nor uniqueness of quotient and remainder can be guaranteed. These phenomena
  will be illustrated in Examples~\ref{ex:1} and \ref{ex:2} below (after
  suitable associative real algebras will be constructed).
\end{rmk}

\begin{algorithm}
  \caption{Left Euclidean Algorithm}
  \label{alg:euclidean}
  \begin{algorithmic}
    \REQUIRE Polynomials $F$, $G \in R[t]$, leading coefficient of $G$ is
    invertible.
    \ENSURE Polynomials $Q$, $S \in R[t]$ such that $F = QG + S$ and $\deg S <
    \deg G$.
    \STATE $g \adef$ leading coefficient of $G$
    \STATE $F_0 \adef Fg^{-1}$, $G_0 \adef Gg^{-1}$ \qquad
    \COMMENT{Use $F_0 \adef g^{-1}F$, $G_0 \adef g^{-1}G$ for right division.}
    \STATE $Q \adef 0$, $S \adef F_0$
    \STATE $m \adef \deg S$, $n \adef \deg G_0$
    \WHILE{$m \ge n$}
        \STATE $r \adef$ leading coefficient of $S$
        \STATE $Q \adef Q + r t^{m-n}$, $S \adef S - rG_0t^{m-n}$ \qquad
        \COMMENT{Use $S \adef S - G_0rt^{m-n}$ for right division.}
        \STATE $m \adef \deg S$
    \ENDWHILE
    \RETURN $Q$, $Sg$ \qquad
    \COMMENT{Return $Q$, $gS$ for right division.}
  \end{algorithmic}
\end{algorithm}

The next result has been shown in \cite{gordon65} for division rings but it
holds true in more general rings (see \cite{hegedus13} for the case of dual
quaternions).

\begin{thm}
  \label{th:zero-factor}
  The ring element $r \in R$ is a right zero of $C$ if and only if $t-r$ is a
  right factor of~$C$.
\end{thm}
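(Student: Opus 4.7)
The plan is to use polynomial division to reduce the claim to a single identity. Since the leading coefficient of $t-r$ is $1$, Theorem~\ref{th:polynomial-division} gives a unique right division $C = Q(t-r) + S$ with $\deg S < 1$, so $S = s$ is a constant in $R$. Evaluation is at least additive, so $C(r) = (Q(t-r))(r) + s$, and the whole result collapses to showing that
\begin{equation*}
  \bigl(Q(t-r)\bigr)(r) = 0
  \quad\text{for every } Q \in R[t] \text{ and every } r \in R.
\end{equation*}
Once this is established, $C(r) = s$, and hence $C(r)=0$ if and only if $s=0$, i.e.\ if and only if $C = Q(t-r)$.

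To prove the key identity I would compute directly. Writing $Q = \sum_{i=0}^{e} q_i t^i$ and expanding in $R[t]$ by the multiplication rule,
\begin{equation*}
  Q(t-r) \;=\; \sum_{i=0}^{e} q_i t^{i+1} - \sum_{i=0}^{e} q_i r\, t^i.
\end{equation*}
Applying the (right) evaluation at $r$, and noting that in each summand the ring element $r$ sits immediately to the right of $q_i$ and therefore multiplies $r^i$ on the correct side, yields
\begin{equation*}
  \bigl(Q(t-r)\bigr)(r) \;=\; \sum_{i=0}^{e} q_i r^{i+1} - \sum_{i=0}^{e} q_i r \cdot r^i \;=\; 0,
\end{equation*}
where associativity in $R$ is used in the second sum.

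The main obstacle is precisely the point emphasized by the authors just before the statement: evaluation at $r$ is not a ring homomorphism, so one cannot simply argue $C(r) = Q(r)(r-r)+s = s$ via factorwise substitution (the counterexample $C_r C_q$ shows this fails in general). The reason the identity nevertheless goes through for the specific right factor $t-r$ is that $r$ appears on the right of the coefficients of $t-r$, so the non-commutative ordering aligns with the way right evaluation substitutes $r$ on the right of each $q_i$. Both directions of the theorem then follow immediately from uniqueness of the right remainder: the ``if'' direction is the above identity applied to the hypothetical factorization $C = Q(t-r)$, and the ``only if'' direction is the observation that $C(r) = 0$ forces the remainder $s$ to vanish.
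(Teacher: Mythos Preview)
Your proof is correct and follows essentially the same route as the paper: write $C = Q(t-r) + s$ via polynomial division, expand $Q(t-r)$ term by term, verify directly that $(Q(t-r))(r) = 0$, and conclude $C(r) = s$ from additivity of evaluation. The only minor slip is terminological: what you call ``right division'' is, in the paper's conventions, the \emph{left} quotient and remainder (the form $Q_\ell G + S_\ell$); the formula you write is nonetheless the correct one.
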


\begin{proof}
  Using polynomial division, we obtain $C = F + s$ where $F = Q(t-r)$ and $s \in
  R$. By uniqueness of polynomial division, $t-r$ is a right factor if and only
  if $s = 0$. Writing $Q = \sum_{i=0}^d q_it^i$, we compute
  \begin{equation*}
    Q(t-r)
    = \sum_{i=0}^d(q_it^i) (t-r)
    = \sum_{i=0}^dq_it^{i+1} - \sum_{i=0}^dq_irt^i 
  \end{equation*}
  whence
  \begin{equation*}
    F(r) = \sum_{i=0}^dq_ir^{i+1} - \sum_{i=0}^dq_irr^i = 0.
  \end{equation*}
  From $C(r) = F(r) + s = s$ we infer that $r$ is a right zero of $C$ if and only
  $s = 0$.
\end{proof}

Theorem~\ref{th:zero-factor} has a corollary which is sometimes useful:

\begin{cor}
  If $F$, $G \in R[t]$ and $S = \lrem(F,G)$, then $F(r) = S(r)$ for every zero
  $r$ of~$G$.
\end{cor}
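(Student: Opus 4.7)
The plan is to combine polynomial division with Theorem~\ref{th:zero-factor} applied twice, using the additivity of evaluation to handle the sum that appears.

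First I would invoke the defining identity $F = Q_\ell G + S$ from Theorem~\ref{th:polynomial-division} (this is implicit in the hypothesis that $S = \lrem(F,G)$, and in particular requires the leading coefficient of $G$ to be invertible so that $Q_\ell$ exists). Since evaluation at a fixed $r \in R$ is additive, I can take right evaluation at $r$ to get $F(r) = (Q_\ell G)(r) + S(r)$. Hence the whole statement reduces to showing that $(Q_\ell G)(r) = 0$ whenever $r$ is a right zero of $G$.

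The key step is this last reduction. Evaluation is not a homomorphism in general, so I cannot simply say $(Q_\ell G)(r) = Q_\ell(r) \, G(r) = 0$. Instead I would use Theorem~\ref{th:zero-factor}: since $G(r) = 0$, there exists $Q' \in R[t]$ with $G = Q'(t-r)$. Substituting into the product gives $Q_\ell G = (Q_\ell Q')(t-r)$, which exhibits $t-r$ as a right factor of $Q_\ell G$. Applying Theorem~\ref{th:zero-factor} in the opposite direction then yields $(Q_\ell G)(r) = 0$, completing the proof.

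The only subtle point is the non-homomorphic nature of evaluation, which is precisely what makes the direct computation fail and forces the detour through Theorem~\ref{th:zero-factor} in both directions. Once one recognises that the theorem provides a clean bridge between ``$r$ is a right zero'' and ``$t-r$ is a right factor,'' the argument is routine; no further machinery (and in particular no assumption that $R$ be a division ring) is needed beyond the invertibility of the leading coefficient of $G$ that is already built into the definition of $\lrem(F,G)$.
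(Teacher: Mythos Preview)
Your proof is correct and follows essentially the same route as the paper's: write $F = QG + S$, observe that $t-r$ is a right factor of $G$ (and hence of $QG$) by Theorem~\ref{th:zero-factor}, and conclude $(QG)(r)=0$ so that $F(r)=S(r)$. The paper's argument is just a slightly more compressed version of yours.
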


\begin{proof}
  If $r$ is a zero of $G$, then $t-r$ is a right factor of $G$ and also of $QG$
  for $Q = \lquo(F, G)$. Hence, $F(h) = (QG)(h) + S(h) = 0 + S(h)$.
\end{proof}

\begin{defn}
  \label{def:factorization}
  We say that the polynomial $C \in R[t]$ of degree $n \ge 1$ \emph{admits a
    factorization} if there exist ring elements $c_n$, $h_1$, $h_2$, \ldots,
  $h_n$ such that $C = c_n(t-h_1)(t-h_2) \cdots (t-h_n)$.
\end{defn}

It will simplify things a lot if the leading coefficient $c_n$ of $C$ is
invertible. In this case, it is no loss of generality to assume $c_n = 1$
because $C$ admits a factorization if and only if $c_n^{-1}C$ does. We will
generally assume that $C$ is monic.

Theorem~\ref{th:zero-factor} relates zeros with linear right factors of $C$.
Using Theorem~\ref{th:polynomial-division} and Algorithm~\ref{alg:euclidean} it
is possible to compute linear right factors from zeros. This situation is
reminiscent of polynomial factorization over the complex numbers $\C$ but there
are fundamental differences due to non-commutativity and existence of
zero-divisors.

\section{Existence of Factorizations}
\label{sec:existence}

In the following, denote by $R$ a finite-dimensional associative real involutive
algebra with multiplicative identity $1$ and involution $\gamma$. The involution
$\gamma\colon R \to R$ has the following properties:
\begin{itemize}
\item $\gamma \circ \gamma$ is the identity on~$R$
\item $\forall a,b \in R,\ \alpha,\beta \in \R\colon \gamma(\alpha a + \beta b)
  = \alpha\gamma(a) + \beta\gamma(b)$
\item $\forall a,b \in R\colon \gamma(ab) = \gamma(b)\gamma(a)$
\end{itemize}
These properties already imply that $\gamma(1) = 1$:
\begin{equation*}
  \forall a \in R\colon 1 \cdot \gamma(a) = \gamma(a) \implies \gamma(1 \cdot \gamma(a)) = \gamma(\gamma(a)) \implies a \cdot \gamma(1) = a.
\end{equation*}
We are going to prove existence results of factorizations of left polynomials
over some $R$ for which the additional assumption $\gamma(a)a = a\gamma(a)$
holds for all $a \in R$. Theorem~\ref{th:factorization} below covers the case of
division rings (real numbers, complex numbers and quaternions by Frobenius'
Theorem) but in its formulation and proof we do not make direct use of
properties of these number systems. The reason is that the corresponding
Algorithm~\ref{alg:factorization} may make sense in the presence of
non-invertible elements as well. Variants and generalizations of
Theorem~\ref{th:factorization} and Algorithm~\ref{alg:factorization} with weaker
assumptions are given in Section~\ref{sec:more-results}.

Since the center of the ring $R$ contains $\R$, any polynomial $C \in R[t]$ has
a unique real monic factor of maximal degree. We denote this factor by $\mrpf C$
(the ``maximal real polynomial factor''). For reasons of simplicity, we assume
that it equals~$1$.

\begin{thm}
  \label{th:factorization}
  If a finite-dimensional associative real involutive algebra $R$ with
  involution $\gamma$ is
  \begin{enumerate}[label=\alph*)]
  \item a division ring and
  \item satisfies
    \begin{equation}
      \label{eq:1}
      \forall a \in R\colon \nu(a)\coloneqq \gamma(a)a = a\gamma(a) \in \R,
    \end{equation}
  \end{enumerate}
  then every monic polynomial $C \in R[t]$ of positive degree and with $\mrpf C
  = 1$ admits a factorization.
\end{thm}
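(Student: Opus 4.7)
The plan is induction on $n = \deg C$. The base case $n = 1$ is immediate, since $C = t - h_1$ for some $h_1 \in R$. For $n \geq 2$, the task reduces to producing a single right zero $h \in R$ of $C$: Theorem~\ref{th:zero-factor} then yields $C = C_1(t-h)$ with $\deg C_1 = n-1$, and because $\R$ lies in the center of $R$, any real monic factor of $C_1$ would also factor $C$, so $\mrpf C_1 = 1$ and the induction hypothesis applies to $C_1$.

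To find $h$, I follow the classical Niven/Gordon strategy and introduce the \emph{norm polynomial} $\NQ(C) \coloneqq C\gamma(C)$, where $\gamma$ is extended coefficient-wise to $R[t]$. Assumption~\eqref{eq:1}, applied to sums $c_i + c_j$, shows that each coefficient of $\NQ(C)$ is real, hence $\NQ(C) \in \R[t]$. Moreover $\NQ(C)$ can have no real root $r$: otherwise $C(r)\gamma(C)(r) = 0$ in the division ring $R$ would force $C(r) = 0$, giving a linear real factor $t-r$ of $C$ and contradicting $\mrpf C = 1$. Thus every irreducible monic real factor of $\NQ(C)$ is quadratic; I pick such a $P$ and set $S \coloneqq \lrem(C,P)$, of degree at most~$1$.

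The heart of the argument is to show that $S = at + b$ with $a \neq 0$ and that $h \coloneqq -a^{-1}b$ is also a zero of $P$. First, $S \neq 0$ (else $P \mid C$). Next, $S$ cannot be a nonzero constant $s$: writing $C = QP + s$ and expanding $C\gamma(C)$ while using centrality of $P$ and $\gamma(P) = P$ leaves $s\gamma(s) = \nu(s)$ as the only term not divisible by $P$, so $P \mid \nu(s)$ in $\R[t]$ forces $\nu(s) = 0$, contradicting $s \neq 0$ in the division ring. Hence $S = at + b$ with $a \neq 0$. To see that $h$ is a zero of $P$, I compute $S\gamma(S)$ in two ways: direct expansion yields $\nu(a) t^2 + (a\gamma(b) + b\gamma(a))t + \nu(b) \in \R[t]$ with leading coefficient $\nu(a) \neq 0$; on the other hand, substituting $S = C - QP$ and using $\gamma(P) = P$ and centrality of $P$ gives
\begin{equation*}
  S\gamma(S) = C\gamma(C) - P\bigl(C\gamma(Q) + Q\gamma(C)\bigr) + P^2 Q\gamma(Q),
\end{equation*}
so $P \mid S\gamma(S)$. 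Comparing monic degree-two parts forces $S\gamma(S) = \nu(a) P$; evaluating at $h$ and using $S(h) = 0$ gives $P(h) = 0$. The Corollary to Theorem~\ref{th:zero-factor} now yields $C(h) = S(h) = 0$, closing the induction.

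The step I expect to be the main obstacle is the identity $S\gamma(S) = \nu(a) P$: it is what forces the candidate zero $h$ of $S$ to also be a zero of $P$, and it simultaneously leans on assumption~\eqref{eq:1} (reality of norms), centrality of $\R \subseteq R$ (to commute $P$ freely past arbitrary elements of $R[t]$), $\gamma$-invariance of real polynomials, and the division-ring hypothesis (to rule out the constant case for $S$ and to guarantee $\nu(a) \neq 0$). The other ingredients---reality of $\NQ(C)$, absence of real roots, and the inductive step itself---are essentially bookkeeping.
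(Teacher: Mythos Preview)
Your proof is correct and follows essentially the same route as the paper: pick a quadratic real factor of the norm polynomial, show the linear remainder has a unique zero, and verify that this zero is also a zero of the quadratic factor (the paper packages this as Lemma~\ref{lem:2}). The one place to tighten is the line ``evaluating at $h$ and using $S(h)=0$ gives $P(h)=0$'': since evaluation is not multiplicative, you should note that $S\gamma(S)=\gamma(S)S$ (immediate from your polarization identity $a\gamma(b)+b\gamma(a)=\gamma(a)b+\gamma(b)a$), so that $t-h$ is a \emph{right} factor of $\gamma(S)S=\nu(a)P$---the paper's own proof of Lemma~\ref{lem:2} is equally terse on this point.
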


By Frobenius' Theorem (see for example \cite{palais68}) $R$ is either the field
of real or complex numbers or the skew field of quaternions. Hence,
Theorem~\ref{th:factorization} does not present a new result. Moreover, the
involution $\gamma$ is the usual complex or quaternion conjugation and
requirement b) need not be stated as hypothesis. However, our formulation of
Theorem~\ref{th:factorization} already takes into account later generalizations
where condition a) will not be needed but condition b) will be crucial.

Let us drop for a moment the condition that $R$ is a division ring. If an
involution $\gamma$ as in Theorem~\ref{th:factorization} is given, the inverse
of $r \in R$ (if it exists) is $\gamma(r)/\nu(r)$. In particular, $r$ is
invertible if and only if $\nu(r) \neq 0$ and $\gamma(r)$ is unique up to sign.
If the involution $\gamma$ does not satisfy \eqref{eq:1}, we may instead
consider the multiplicative semigroup
\begin{equation}
  \label{eq:2}
  R^\gamma \coloneqq
  \{ a \in R \mid \gamma(a)a = a\gamma(a) \in \R \}.
\end{equation}
Examples for semigroups of this type are the pin and spin groups of Clifford
algebras. We may extend $\gamma$ to the involution
\begin{equation*}
  R[t] \to R[t],\quad
  \sum_{i=0}^n c_it^i \mapsto \sum_{i=0}^n \gamma(c_i)t^i
\end{equation*}
for polynomials over $R$. By abuse of notation, we denote it by the same symbol.
For $C \in R[t]$, condition \eqref{eq:1} implies that the \emph{norm polynomial}
$\nu(C) \coloneqq C\gamma(C) = \gamma(C)C$ of $C$ is in $\R[t]$. Also note that
we may perform the semigroup construction of Equation~\eqref{eq:2} for
polynomials:
\begin{equation}
  \label{eq:3}
  R^\gamma[t] \coloneqq
  \{ C \in R[t] \mid \gamma(C)C = C\gamma(C) \in \R[t] \}.
\end{equation}

\begin{rmk}
  \label{rem:2}
  In this context, our general assumption that the polynomial to be factorized
  is monic, is an actual restriction. If $\nu(C) \neq 0$, there is a suitable
  fractional linear parameter transformation $t \mapsto (\alpha t +
  \beta)(\gamma t + \delta)^{-1}$ with $\alpha$, $\beta$, $\gamma$, $\delta \in \R$
  and $\alpha\delta - \beta\gamma \neq 0$ that makes the leading coefficient $g$
  of $C' \coloneqq (\gamma t + \delta)^{\deg C}C$ invertible. Any factorization
  of $g^{-1}C'$ also gives rise to a factorization of $C$ and vice versa. If
  $\nu(C) = 0$, no suitable fractional linear parametrization exists.
  Polynomials with the property $\nu(C) = 0$ are not covered in this text. Their
  factorizability requires a separate investigation.
\end{rmk}

\begin{lem}
  \label{lem:2}
  Suppose that $R$, $C$, $\gamma$, and $\nu$ are as in
  Theorem~\ref{th:factorization} (but $R$ is not necessarily a division ring).
  If $M$ is a monic, quadratic and real factor of $\nu(C)$ and $S \coloneqq
  \lrem(C,M)$ satisfies $\nu(S) \neq 0$, then $S$ has a unique zero $h$ and
  $t-h$ is a right factor of~$C$.
\end{lem}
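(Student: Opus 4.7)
The plan is to exploit that $M$ is real and hence central in $R[t]$, so that the norm equation $\nu(C) = C\gamma(C)$ forces a rigid relationship between $\nu(S)$ and $M$; this will reveal that the leading coefficient of $S$ is invertible, and a zero of $S$ can then be transported back to a right factor of $C$ via the Corollary to Theorem~\ref{th:zero-factor}.

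Write $S = s_1 t + s_0$ and $C = QM + S$ with $Q = \lquo(C, M)$. Applying $\gamma$ gives $\gamma(C) = M\gamma(Q) + \gamma(S)$ (since $\gamma(M) = M$), and multiplying out $\nu(C) = C\gamma(C)$ while using the centrality of $M$ yields
\[
  \nu(C) = \bigl(MQ\gamma(Q) + Q\gamma(S) + S\gamma(Q)\bigr)\,M + \nu(S).
\]
Both $\nu(C)$ and $\nu(S)$ lie in $\R[t]$ by condition~\eqref{eq:1}, and $\deg \nu(S) \le 2$ with leading coefficient $\nu(s_1) = s_1\gamma(s_1) \in \R$. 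Splitting off the degree-two part, $\nu(S) = \nu(s_1) M + R$ with $\deg R < 2$, and substituting back gives $\nu(C) = (X + \nu(s_1))\,M + R$, where $X = MQ\gamma(Q) + Q\gamma(S) + S\gamma(Q)$. By the uniqueness part of Theorem~\ref{th:polynomial-division}, $R = \lrem(\nu(C), M)$, which vanishes because $M \mid \nu(C)$. Hence $\nu(S) = \nu(s_1) M$, and the hypothesis $\nu(S) \neq 0$ forces $\nu(s_1) \neq 0$. Therefore $s_1$ is invertible with $s_1^{-1} = \gamma(s_1)/\nu(s_1)$, so $S = s_1(t - h)$, where $h = -s_1^{-1} s_0$ is the unique zero of $S$.

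To conclude that $t - h$ is a right factor of $C$, note that $\nu(S) = \gamma(S) S = \gamma(S)\,s_1(t - h)$, so $t - h$ is a right factor of $\nu(S) = \nu(s_1) M$; since $\nu(s_1)$ is a nonzero real scalar, $t - h$ is also a right factor of $M$, equivalently $M(h) = 0$. The Corollary to Theorem~\ref{th:zero-factor} (with $G = M$) then yields $C(h) = S(h) = 0$, and a final application of Theorem~\ref{th:zero-factor} produces the desired right factor $t - h$ of $C$.

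The only genuinely delicate step is the identity $\nu(S) = \nu(s_1) M$. It rests on two applications of the uniqueness of polynomial division under the central divisor $M$, together with the fact that $\nu(S)$ itself (not merely $\nu(C)$) lies in $\R[t]$. Once this identity is in hand, the remaining pieces fit together mechanically via Theorem~\ref{th:zero-factor} and its Corollary.
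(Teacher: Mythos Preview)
Your proof is correct and follows essentially the same route as the paper: expand $\nu(C) = (QM+S)\gamma(QM+S)$ using centrality of $M$ to conclude $M \mid \nu(S)$, deduce $\nu(S) = \nu(s_1)M$ and hence invertibility of $s_1$, and then transport the zero $h$ of $S$ back to $C$. Your version spells out a few points the paper leaves implicit (identifying the constant $c$ as $\nu(s_1)$ via uniqueness of division, and invoking the Corollary to Theorem~\ref{th:zero-factor} where the paper simply notes that $t-h$ divides both $M$ and $S$, hence $C = QM + S$), but the substance is the same.
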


\begin{proof}
  Using polynomial division we can find $Q$, $S \in R[t]$ such that $C = QM + S$
  and $\deg S \le 1$. Moreover, because of
  \begin{equation*}
    \nu(C) = (QM+S)\gamma(QM+S)
           = (QM+S)(M\gamma(Q) + \gamma(S))
           = (\nu(Q)M + Q\gamma(S) + S\gamma(Q))M + \nu(S),
         \end{equation*}
  $M$ is also a factor of $\nu(S)$. Thus, there exists $c \in \R$ such that
  $\nu(S) = cM$. By assumption, $c \neq 0$ whence $S = s_1t + s_0$ with $s_0$,
  $s_1 \in R$ and $\nu(s_1) = c \neq 0$. Hence, there is a unique zero $h =
  -s_1^{-1}s_0$ of $S$ and $t-h$ is not only a right factor of $S$ but also
  of~$M$.
\end{proof}

\begin{proof}[Proof of Theorem~\ref{th:factorization}]
  We prove the theorem by induction on $n \coloneqq \deg C$. For $n = 1$ the
  statement is obvious. For the induction step, we pick a quadratic factor $M$
  of $\nu(C)$ and compute $S \coloneqq \lrem(C, M)$. The remainder polynomial
  always satisfies $\nu(S) \neq 0$ because $M$ cannot be a factor of $C$ and $R$
  is assumed to be a division ring. Hence, we may use Lemma~\ref{lem:2} to
  construct one right factor $t - h$. The induction hypothesis applied to
  $\lquo(C,t-h)$ then guarantees existence of a factorization.
\end{proof}

Our inductive proof of Theorem~\ref{th:factorization} gives rises to the
recursive Algorithm~\ref{alg:factorization} for computing factorizations of a
polynomial $C \in R[t]$. It has been used in \cite{hegedus13} to factor
quaternion and certain dual quaternion polynomials. If $M \in \R[t]$ is of
degree two, we denote the unique zero (according to Lemma~\ref{lem:2}) of
$\lrem(C,M)$ by $\czero(C,M)$. For two tuples $T_1$ and $T_2$ of polynomials we
denote by $(T_1,T_2)$ their concatenation.

\begin{algorithm}
  \caption{$\gfactor$: Factorization algorithm for polynomials based on
    Theorem~\ref{th:factorization}}
  \label{alg:factorization}
  \begin{algorithmic}
    \REQUIRE Monic polynomial $C \in R[t]$, $\deg C = n \ge 1$, $\mrpf C = 1$
    \ENSURE A tuple $(t-h_1,t-h_2,\ldots,t-h_n)$ of linear polynomials such that
    $C=(t-h_1)(t-h_2)\cdots(t-h_n)$.
    \IF{$\deg C = 0$}
    \RETURN $()$ \COMMENT{Empty tuple.}
    \ENDIF
    \STATE $M \adef$ quadratic, real factor of $\nu(C) \in \R[t]$
    \STATE $h \adef \czero(C, M)$
    \STATE $C \adef \rquo(C, t-h)$
    \RETURN $\lconcat{t-h}{\gfactor(C)}$
  \end{algorithmic}
\end{algorithm}

\begin{rmk}
  \label{rem:factorization}
  A few remarks on Algorithm~\ref{alg:factorization} are in order:
  \begin{itemize}
  \item Because in each recursion, a quadratic factor $M$ of the norm polynomial
    $\nu(C)$ is chosen, the algorithm is not deterministic. In fact, it
    generically gives rise to a finite number of different factorizations. The
    total number of factorizations depends on the number of irreducible (over
    $\R$) real quadratic factors of $\nu(C)$, the number of real linear factors
    of $\nu(C)$ and their respective multiplicities.
  \item Algorithm~\ref{alg:factorization} will produce all factorizations of
    $C$: If $C = C'(t-h)$, then $\nu(C) = \nu(C')\nu(t-h)$ and $\nu(t-h)$ is
    among the quadratic factors of $\nu(C)$.
  \end{itemize}
\end{rmk}

Also note that the assumption $\mrpf C = 1$ can be dropped for rings that
contain the complex numbers $\C$ as a subring. We may combine any factorization
$\mrpf C = (t-z_1)(t-z_2)\cdots(t-z_\ell)$ over (the subring isomorphic to) $\C$
with any factorization $\lquo(C,\mrpf C) = (t-h_1)(t-h_2)\cdots(t-h_m)$ to
obtain the factorization $C =
(t-z_1)(t-z_2)\cdots(t-z_\ell)(t-h_1)(t-h_2)\cdots(t-h_m)$.

Algorithm~\ref{alg:factorization} is based on a factorization of the real
polynomial $\nu(C)$ over $\R$. For moderate polynomial degrees, numeric
factorization of real polynomials is always possible
(\cite{qrgcd04,kaltofen06}), but the ensuing polynomial division may be tricky.
Without going into detail, we mention that it is possible to make
Algorithm~\ref{alg:factorization} numerically stable by using the
evaluation-interpolation univariate polynomial division algorithm (e.g., the
fast and robust algorithm based on the Fast Fourier Transform in
\cite{bini2012polynomial,pan2001gcd,li2010blind}) to compute $h \in R$ such that
$M$ divides $C(t-\gamma(h))$. Because of $M = (t-h)(t-\gamma(h))$, this is
equivalent to the computation in Algorithm~\ref{alg:factorization}.

\section{Factorization Examples}
\label{sec:factorization-examples}

In this section, we explicitly construct some rings over Clifford algebras and
present factorization examples for polynomials over those rings. Note that not
all polynomials in these examples satisfy the requirements of
Theorem~\ref{th:factorization} and Algorithm~\ref{alg:factorization}.
Nonetheless, it might be possible to use Algorithm~\ref{alg:factorization} to
compute factorizations.

\subsection{Clifford Algebras}
\label{sec:clifford-algebras}

Our brief introduction to Clifford algebras follows \cite{klawitter15} and
\cite[Section~9.1]{selig05}. In the real vector space $\R^n$ we consider a
quadratic form $\varrho\colon \R^n \to \R$. With respect to a basis
$(e_1,e_2,\ldots,e_n)$ it is described by a matrix $Q \in \R^{n \times n}$ via
$\varrho(e_i,e_j) = e_i^\tp \cdot Q \cdot e_j$. The defining relations for the
Clifford algebra are
\begin{equation}
  \label{eq:4}
  e_ie_j + e_je_i \coloneqq 2e_i^\tp \cdot Q \cdot e_j
  \quad\text{for all $i$, $j \in \{1,2,\ldots,n\}.$}
\end{equation}
With respect to a different basis, the same quadratic form is described by a
congruent matrix. Hence, by Sylvester's Theorem of Inertia, there is a basis
such that $Q$ is diagonal with the first $p$ diagonal entries equal to $1$, the
next $q$ diagonal entries equal to $-1$ and the remaining $r = n - p - q$
diagonal entries equal to $0$. We assume that this is the case for the chosen
basis $(e_1,e_2,\ldots,e_n)$, which, together with \eqref{eq:4}, implies $e_ie_j
= -e_je_i$ whenever $i \neq j$. For the product of successive basis elements we
also use the shorthand notation
\begin{equation*}
  e_{12 \ldots k} \coloneqq e_1e_2 \cdots e_k
  \quad\text{for $1 \le k \le n$}.
\end{equation*}
The span of all these element with multiplicative structure given by
\eqref{eq:4} is called a \emph{Clifford algebra} and will be denoted by
$\Clifford{p,q,r}$. An element of $\Clifford{p,q,r}$ can be written as
\begin{equation*}
  r = a_0 + \sum_{k=1}^n\;\sum_{i_1 < i_2 < \cdots < i_k} a_{i_1i_2\ldots i_k}e_{i_1i_2\ldots i_k}
\end{equation*}
where $a_0$, $a_1$, \ldots, $a_{12\ldots n} \in \R$ and all summation indices
are between $1$ and $n$. Often, the real unit $1$ is identified with an
additional basis element $e_0$ whence above sum starts with $r = a_0e_0 +
\ldots$ We will usually follow this convention.

The \emph{conjugation} $r \mapsto \Cj{r}$ in $\Clifford{p,q,r}$ is the
$\R$-linear anti-automorphism defined by
\begin{equation*}
  \Cj{(e_{i_1}e_{i_2} \cdots e_{i_k})} \coloneqq (-1)^k(e_{i_k} \cdots e_{i_2}e_{i_1}).
\end{equation*}
It gives rise to the \emph{norm $N(r) \coloneqq r\Cj{r}$.} Elements in the span
of $e_1$, $e_2$, \ldots, $e_n$ are called \emph{vectors} and we identify them
with elements of $\R^n$. The \emph{even sub-algebra} $\evenClifford{p,q,r}$ of
$\Clifford{p,q,r}$ is the sub-algebra generated by basis elements
$e_{i_1i_2\ldots i_k}$ with $k$ even (and by $e_0$). The \emph{spin group} is
\begin{equation*}
  \Spin{p,q,r} \coloneqq \{ r \in \evenClifford{p,q,r} \mid N(r) = \pm 1,\ \forall v \in \R^n\colon rv\Cj{r} \in \R^n\}.
\end{equation*}
The map $\sandwich{r}\colon v \mapsto rv\Cj{r}$ is called the \emph{sandwich
  operator.}

Clifford algebras comprise several well-known algebraic structures. In the
context of polynomial factorization, algebras that permit the construction of
isomorphisms to transformation groups of Euclidean and non-Euclidean spaces are
of special interest. There, factorization corresponds to the decomposition of
rational motions into products of elementary motions.

\subsubsection*{Quaternions}

An element of $\evenClifford{0,3,0}$ can be written as $r = a_0e_0 + a_1e_{12} +
a_2e_{13} + a_3e_{23}$. We have $e_{12}^2 = e_{12}e_{12} = -e_{12}e_{21} = -1$
and also $e_{13}^2 = e_{23}^2 = -1$. This even Clifford sub-algebra is
isomorphic to the quaternion algebra $\H$. The basis elements $e_{12}$,
$e_{13}$, and $e_{23}$ correspond, in that order, to the quaternion units $\qi$,
$\qj$, and $\qk$, respectively. We will usually use the quaternion notation and
write $r = a_0 + a_1\qi + a_2\qj + a_3\qk$. For $r$ as above, $N(r) = a_0^2 +
a_1^2 + a_2^2 + a_3^2 \ge 0$ and $\sandwich{r}(v) \in \R^3$ for all $v \in
\R^3$. Hence, the only defining condition for spin group elements is $N(r) = 1$.
The map $r \to \sandwich{r}$ is an isomorphism between
$\Spin{0,3,0}/\{\pm 1\}$ and $\SO$ and accounts for the importance of
$\Clifford{0,3,0}$ in spatial kinematics.

Also note that the factor group $\H^\times/\R^\times$ of the multiplicative
quaternion group modulo the multiplicative reals is isomorphic to $\SO$ via the
map that sends $r \in \H^\times$ to the map $x \in \R^3 \mapsto
\sandwich{r}(x)/N(r)$. This isomorphism is more useful in the context of
quaternion polynomial factorization ($C\Cj{C} = 1$ is only satisfied by the
constant polynomials $C = \pm 1$).

\subsubsection*{Split Quaternions}

Also kinematics in planar hyperbolic geometry may be treated by means of a
Clifford algebra. The construction is similar to the construction of $\H$ but is
based on the even Clifford algebra $\evenClifford{1,2,0}$. We set $\hi \coloneqq
e_{12}$, $\hj \coloneqq e_{13}$, $\hk \coloneqq e_{23}$ and denote the algebra
generated by $1$, $\hi$, $\hj$ and $\hk$ by $\HH$. The norm of $r = a_0 + a_1\hi
+ a_2\hj + a_3\hk \in \HH$ equals
\begin{equation*}
  N(r) = (a_0 + a_1\hi + a_2\hj + a_3\hk)(a_0 - a_1\hi - a_2\hj - a_3\hk) = a_0^2 - a_1^2 - a_2^2 + a_3^2.
\end{equation*}
We see that $N(\sandwich{r}(v)) = N(r)^2N(v)$ equals $N(v)$ for all vectors $v
\in \R^3$ if and only if $N(r) = \pm 1$. Hence $\Spin{1,2,0}$ is isomorphic to a
transformation subgroup of planar hyperbolic geometry. In contrast to the
quaternions $\H$, the norm of these so-called \emph{split quaternions} can
attain negative values. As in the case of quaternions we have $r^{-1} =
\Cj{r}/N(r)$ but the inverse element exists only if $N(r) \neq 0$. In
particular, $\HH$ is not a division ring and Theorem~\ref{th:factorization} is
not generally applicable.

\subsubsection*{Dual Quaternions}

An isomorphism from a Clifford algebra based group to the group $\SE$ of rigid
body displacements requires a more elaborate construction. An element of
$\evenClifford{3,0,1}$ is of the shape
\begin{equation*}
  r =
  a_0e_0 + a_3e_{12} - a_2e_{13} + b_1e_{14} +
  a_1e_{23} + b_2e_{24} + b_3e_{34} - b_0e_{1234}
\end{equation*}
with $a_0$, $a_1$, $a_2$, $a_3$, $b_0$, $b_1$, $b_2$, $b_3$ in $\R$. Its norm
equals
\begin{equation*}
  N(r) = (a_0^2 + a_1^2 + a_2^2 + a_3^2)e_0 - (a_0b_0 + a_1b_1 + a_2b_2 + a_3b_3)e_{1234}.
\end{equation*}
The spin group conditions are
\begin{equation*}
  a_0^2 + a_1^2 + a_2^2 + a_3^2 = 1,\quad
  a_0b_0 + a_1b_1 + a_2b_2 + a_3b_3 = 0
\end{equation*}
and the restriction of the conjugation map $r \mapsto \Cj{r}$ to $\Spin{3,0,1}$
(but not its extension to $\DH$) qualifies to play the role of $\gamma$ in
Theorem~\ref{th:factorization}.

The algebra of dual quaternions $\DH$ is obtained from $\H$ by extension of
scalars from the real numbers to the dual numbers $\D = \R[\eps]/\langle \eps^2
\rangle$. By Equation~(3.3) of \cite{klawitter15}, the map
\begin{multline*}
  a_0e_0 + a_3e_{12} - a_2e_{13} + b_1e_{14} +
  a_1e_{23} + b_2e_{24} + b_3e_{34} - b_0e_{1234}\\
  \mapsto
  a_0 + a_1\qi + a_2\qj + a_3\qk +
  \eps(b_0 + b_1\qi + b_2\qj + b_3\qk)
\end{multline*}
is an isomorphism between $\evenClifford{3,0,1}$ and the algebra $\DH$ of dual
quaternions. Again, we will prefer the dual quaternion notation in this text.
The spin group $\Spin{3,0,1}$ is isomorphic to $\SE$ by virtue of the action
$(x_1,x_2,x_3) \mapsto (y_1,y_2,y_3)$ where
\begin{equation*}
  1 + \eps(y_1\qi + y_2\qj + y_3\qk) = (a - \eps b)(1 + \eps(x_1\qi + x_2\qj + x_3\qk))(\Cj{a} + \eps\Cj{b})
\end{equation*}
and $r = a + \eps b \in \Spin{3,0,1}$. This is not quite the sandwich operator
but reduces to $\sandwich{a}$ for pure quaternions ($b = 0$). The translation
vector equals $a\Cj{b} - b\Cj{a}$. More generally, transformation groups of
arbitrary Euclidean spaces can be modeled by spin groups of Clifford algebras
\cite[Chapter~3]{klawitter15}.

Now that we have explicitly constructed several associative real algebras, we
are able to illustrate Remark~\ref{rem:1} on non-existence or non-uniqueness of
quotient and remainder by concrete examples:

\begin{example}
  \label{ex:1}
  Division of $F = t \in \DH[t]$ by $G = t\eps \in \DH[t]$ is not possible;
  quotient and remainder do not exist.
\end{example}
\begin{example}
  \label{ex:2}
  With
    \begin{alignat*}{2}
      Q_1 &= t + 1 + 3 \hi + \hj + 2 \hk,\quad&
      S_1 &= 1 + \hj,\\
      Q_2 &= t + 5 - \hi + \hj + 2 \hk,\quad&
      S_2 &= 1 - 3\hj - 4 \hk,
    \end{alignat*}
  and $G = (1 + \hi) t + 2 \hj - \hk$ we have
  \begin{equation*}
    F \coloneqq
    Q_1G + S_1 = Q_2G + S_2 = 
    (1 + \hi) t^2  + (4 + 4\hi + 5\hj + 2\hk)t + 5 - 5\hi + 6\hj - 7\hk.
  \end{equation*}
  Neither quotient nor remainder of the division of $F$ by $G$ are unique.
\end{example}

\subsection{Factorization examples}

We now illustrate some peculiarities of polynomial factorization over Clifford
algebras. We consider left polynomials over quaternions, split quaternions and
dual quaternions and demonstrate examples of typical and special factorizations.
Verifying correctness of the presented factorizations is straightforward. Often,
Algorithm~\ref{alg:factorization} could be used for computing factorizations,
even if not all requirements were fulfilled.

\begin{example}
  The polynomial $C = t^2 - (2\qi + \qj + 2)t + 2\qi + \qj + 2\qk + 1 \in \H[t]$
  admits the two factorizations
  \begin{equation*}
    C = (t - 2\qi - 1) (t - \qj - 1)
      = (t - \tfrac{4}{5}\qi + \tfrac{3}{5}\qj - 1)
        (t - \tfrac{6}{5}\qi - \tfrac{8}{5}\qj - 1).
  \end{equation*}
  Other factorizations do not exist. This is a generic case, factorizations can
  be computed by Algorithm~\ref{alg:factorization}.
\end{example}

\begin{example}
  The polynomial $C = t^3 - t^2 + t - 1 \in \R[t]$ admits the factorizations
  \begin{equation}
    \label{eq:5}
    C = (t-1)(t-h)(t-\Cj{h})
  \end{equation}
  where $h \in \mathbb{U}$ and
  \begin{equation*}
    \mathbb{U} \coloneqq \{ h \in \H \mid h^2 = -1 \} = \{ h_1\qi + h_2\qj + h_3\qk \mid h_1^2 + h_2^2 + h_3^2 = 1 \}.
  \end{equation*}
  All other factorizations are obtained by suitable permutations of the three
  factors in \eqref{eq:5}. These factors were found by factorizing $C$ over $\C$
  as $C = (t-1)(t-\ci)(t+\ci)$ and replacing the complex unit $\ci$ with $h$.
  Correctness of this construction follows from $h^2 = \ci^2 = -1$. As far as
  factorization of real polynomials is concerned, there is no essential
  algebraic difference between $h \in \mathbb{U}$ and~$\ci$.
\end{example}

The factorization theory of general quaternion polynomials is well understood
(see \cite{niven41}). Given $C \in \H[t]$, write $C = FG$ with $F = \mrpf C$. If
$F = \prod_\ell (t-t_\ell) \prod_m (t-z_m)(t-\overline{z}_m)$ with $t_\ell \in
\R$ and $z_m = x_m + \ci y_m \in \C$ is the factorization of $F$ over $\C$, all
factorizations over $\H$ are obtained by replacing $z_m = x_m + \ci y_m$ with
$x_m + h_my_m$ and $h_m \in \mathbb{U}$. All factorizations of $G$ are obtained
by Algorithm~\ref{alg:factorization} with different choices of the quadratic
factor $M$ at each recursion level. Depending on the number of different
quadratic factors (multiplicities of these factors), there exist between $1$ and
$(\deg G)!$ different factorizations of $G$. All factorizations of $C = FG$ are
obtained by combining factorizations of $F$ with factorizations of $G$ in an
obvious way.

\begin{example}
  \label{ex:3}
  The polynomial $C = t^2 - (2 + 2\hi + \hj)t + 2\hi + \hj + 2\hk + 1 \in \HH$
  admits precisely six different factorizations:
  \begin{equation*}
    \begin{aligned}
      C &= (t - \hj - 1)(t - 2\hi - 1),\\
        &= (t - \tfrac{6}{5}\hi - \tfrac{8}{5}\hj - 1)(t - \tfrac{4}{5}\hi + \tfrac{3}{5}\hj - 1),\\
        &= (t - \tfrac{3}{2}\hi + \tfrac{1}{2}\hj - \tfrac{3}{2}\hk + \tfrac{1}{2})(t - \tfrac{1}{2}\hi - \tfrac{3}{2}\hj + \tfrac{3}{2}\hk - \tfrac{5}{2}),\\
        &= (t - \tfrac{3}{2}\hi + \tfrac{1}{2}\hj + \tfrac{3}{2}\hk - \tfrac{5}{2})(t - \tfrac{1}{2}\hi - \tfrac{3}{2}\hj - \tfrac{3}{2}\hk + \tfrac{1}{2}),\\
        &= (t - \tfrac{1}{2}\hi - \tfrac{3}{2}\hj + \tfrac{1}{2}\hk - \tfrac{1}{2})(t - \tfrac{3}{2}\hi + \tfrac{1}{2}\hj - \tfrac{1}{2}\hk - \tfrac{3}{2}),\\
        &= (t - \tfrac{1}{2}\hi - \tfrac{3}{2}\hj - \tfrac{1}{2}\hk - \tfrac{3}{2})(t - \tfrac{3}{2}\hi + \tfrac{1}{2}\hj + \tfrac{1}{2}\hk - \tfrac{1}{2}).
    \end{aligned}
  \end{equation*}
  In spite of $\HH$ failing to be a division ring, above factorizations can be
  computed by means of Algorithm~\ref{alg:factorization}. The number of six
  factorizations is related to the fact that $\nu(C)$ is the product of
  \emph{four linear polynomials} $t$, $t+1$, $t-2$, and $t-3$. Hence, there
  exist six pairs $(M_1,M_2)$ of quadratic factors such that $\nu(C) = M_1M_2$:
  \begin{multline*}
    (M_1,M_2) \in \{ (t(t+1), (t-2)(t-3)),\
                     (t(t-2), (t+1)(t-3)),\
                     (t(t-3), (t+1)(t-2)),\\
                     ((t+1)(t-2), t(t-3)),\
                     ((t+1)(t-3), t(t-2)),\
                     ((t-2)(t-3), t(t+1)) \}.
  \end{multline*}
\end{example}

The sub-algebra $\langle 1, \hk \rangle$ is isomorphic to $\C$. Therefore, a
real polynomial can be factored over $\HH$ by replacing the complex unit $\ci$
with $\hk$. However, not all monic polynomials in $\HH[t]$ admit factorizations,
as the next example shows.

\begin{example}
  \label{ex:4}
  The polynomial $C = t^2 + 2\hi$ does not admit a factorization. This can be
  proved by means of Theorem~\ref{th:zero-factor}. Comparing coefficients on
  both sides of $C(x_0+x_1\hi+x_2\hj+x_3\hk) = 0$ we arrive at a system of
  algebraic equations in $x_0$, $x_1$, $x_2$, $x_3$ that has no real solutions.
  On the other hand, Algorithm~\ref{alg:factorization} gives $t^2 + 2\hk = (t -
  \hk + 1)(t + \hk - 1) = (t + \hk - 1)(t - \hk + 1)$.
\end{example}

As for polynomials in $\DH[t]$, even stranger examples exist:

\begin{example}
  \label{ex:no-factorization}
  The polynomial $C = t^2 + \eps \in \DH[t]$ admits no factorization. This can
  be shown in a similar way as in Example~\ref{ex:4}.
\end{example}

\begin{example}
  \label{ex:5}
  The polynomial $C = t^2 + 1 - \eps(\qj t - \qi) \in \DH$ has the infinitely
  many factorizations
  \begin{equation*}
    C = (t - \qk + \eps(a\qi + (b-1)\qj))(t + \qk - \eps(a\qi + b\qj))
    \quad\text{where}\quad a, b \in \R.
  \end{equation*}
\end{example}

\begin{example}
  \label{ex:9}
  The polynomial $C = t^2 + 1 + \eps\qi$ lies in the subset $\{ C \in \DH[t]
  \mid \nu(C) \in \R[t] \}$ of real norm polynomials but admits no factorization
  into linear factors belonging to this subset (compare also with
  Example~\ref{ex:multiplication-trick}). Nonetheless, it admits two
  two-parametric families of factorizations over $\DH$:
  \begin{equation}
    \begin{aligned}
      C &= (t+\qi+\eps(a\qj+b\qk-\tfrac{1}{2}))(t-\qi-\eps(a\qj+b\qk-\tfrac{1}{2})) \\
        &= (t-\qi+\eps(a\qj+b\qk+\tfrac{1}{2}))(t+\qi-\eps(a\qj+b\qk+\tfrac{1}{2}))
    \end{aligned}
  \end{equation}
  with arbitrary $a$, $b \in \R$.
\end{example}

\section{Application in Mechanism Science}
\label{sec:mechanism-science}

Factorization in (certain subsets of) Clifford algebras that are isomorphic to
transformation groups has important applications in kinematics and mechanism
science. The polynomial $C$ parameterizes a rational motion (all point
trajectories are rational curves), the factorization corresponds to the
decomposition of this motion into the product of ``elementary motions'' which
are parameterized by the linear factors of the form $t - h$.

In $\H$, $\HH$, and $\DH$ two elements $h$ and $\Cj{h}$ commute whence
\begin{equation}
  \label{eq:6}
  (t-h)(h-\Cj{h})(t-\Cj{h}) = (h-\Cj{h})(t^2 - (h+\Cj{h})t + h\Cj{h}) = (h-\Cj{h})\nu(t-h).
\end{equation}
This shows that $c \coloneqq h-\Cj{h}$ and $\sandwich{t-h}(c)$ are equal up to
multiplication with $\nu(t-h)$. This polynomial is real for quaternions and
split quaternions. For dual quaternions we add $\nu(t-h) \in \R[t]$ as an
assumption. Then $c$ is fixed under the spin group action of $t-h$ for any $t
\in \R$. In case of $\H$ or $\HH$, $c$ is a fix point of all displacements $t -
h$, $t \in \R$. Generically, it is the only fix point in $\H$ and one or one of
three fix points in $\HH$. From this, we may already infer that $t - h$
describes a rotation in spherical space or in the hyperbolic plane. In $\DH$,
the interpretation is similar but Equation~\eqref{eq:6} describes the action of
the displacement $t-h$ on the \emph{line with Pl\"ucker coordinate vector $c$.}
(More precisely, if $c = a + \eps b$, the line's Pl\"ucker coordinate vector
according to the convention of \cite{pottmann10} is $[a,-b]$.) The straight line
$c$ remains fixed and it is the axis of all spatial rotations described by $t-h$
for $t$ varying in~$\R$.

Hence, factorization of a polynomial $C$ in $\H$, $\HH$, or $\DH$ (with the
additional constraint $\nu(C) \in \R[t]$) corresponds to the decomposition of
the motion parameterized by $C$ into a sequence of coupled rotations
(translations in exceptional cases). Let us illustrate this with an example from
mechanism science.

The sub-algebra $\langle 1, \qi, \eps\qj, \eps\qk \rangle$ of $\DH$ modulo the
real multiplicative group $\R^\times$ is isomorphic to $\SE[2]$. A generic
quadratic polynomial $C$ in this sub-algebra admits two factorizations
\begin{equation*}
  C = (t-h_1)(t-h_2) = (t-k_1)(t-k_2)
\end{equation*}
(see Corollary~\ref{cor:generic-motion-polynomial} below). Each factorization
corresponds to the composition of two rotations and both compositions result in
the same motion. Hence, we may rigidly connect the centers of $h_1$, $h_2$,
$k_2$ and $k_1$ (in that order) to obtain a four-bar linkage. Its middle link
performs the motion parameterized by $C$. This is illustrated in
Figure~\ref{fig:anti-parallelogram}, left. It can be shown that the four-bar
linkage is an anti-parallelogram~\cite{gallet16}. A similar decomposition is not
possible for the polynomial of Example~\ref{ex:9}.

The same construction is possible in $\H$ and $\HH$ to obtain spherical and
hyperbolic anti-parallelogram linkages (four-bar linkages with equal opposite
sides) in the respective geometry. In case of $\HH$, it is necessary to use the
more general ``universal hyperbolic geometry'' in the sense of
\cite{wildberger13} in order to avoid awkward in-equality constraints.
Figure~\ref{fig:anti-parallelogram}, right, displays an example in the
Cayley-Klein model of hyperbolic geometry with absolute circle (or null
circle)~$N$. Note that this example admits precisely two factorizations and
gives rise to a unique four-bar linkage. The six factorizations of the
polynomial of Example~\ref{ex:3} give rise to a ``four-bar linkage'' with
\emph{six} possible legs. It cannot be visualized in traditional hyperbolic
geometry because all rotation centers lie in the exterior of $N$ but is
perfectly valid in universal hyperbolic geometry. A more detailed investigation
of the underlying geometry of these factorizations is given in \cite{li18c}.

\begin{figure}
  \centering
  \begin{minipage}{60mm}
    \centering
    \includegraphics{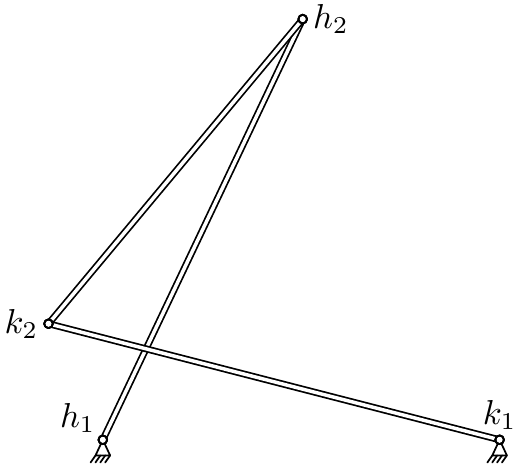}
  \end{minipage}  \begin{minipage}{80mm}
    \centering
    \includegraphics{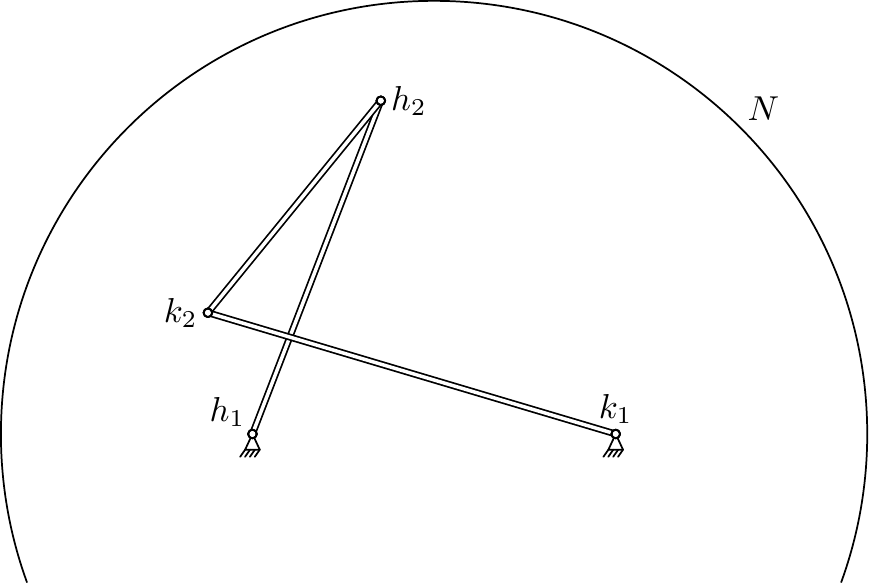}
  \end{minipage}  \caption{Anti-parallelogram mechanism in Euclidean geometry (left) and
    hyperbolic geometry (right)}
  \label{fig:anti-parallelogram}
\end{figure}

The polynomial of Example~\ref{ex:5} parameterizes a circular translation. This
motion can be generated by a parallelogram linkage
(Figure~\ref{fig:parallelogram}) which, indeed, admits infinitely many legs,
each corresponding to one of the infinitely many factorizations
\begin{equation*}
  C = (t-h_1)(t-h_2) = (t-k_1)(t-k_2) = (t-\ell_1)(t-\ell_2) = (t-m_1)(t-m_2) \ldots
\end{equation*}

\begin{figure}
  \centering
  \includegraphics{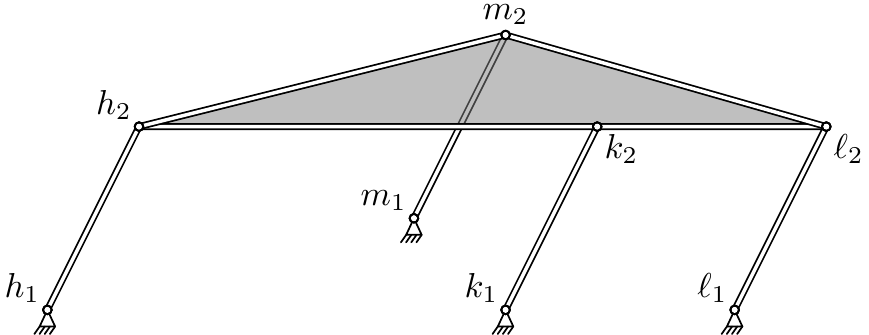}
  \caption{Parallelogram linkage}
  \label{fig:parallelogram}
\end{figure}

\begin{figure}
  \centering
  \includegraphics{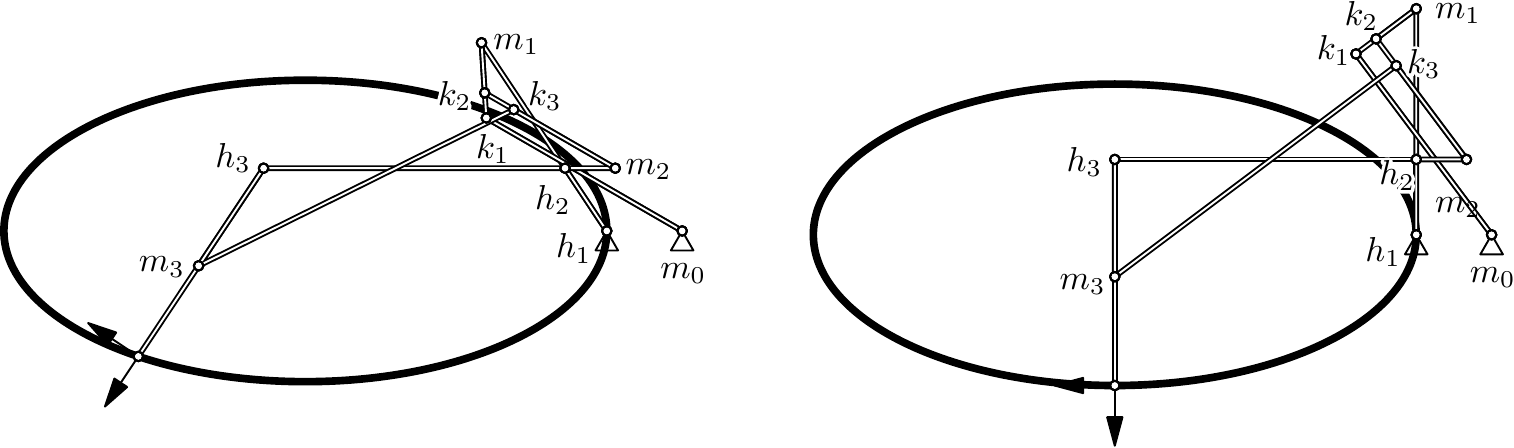}
  \caption{Scissor linkage to draw an ellipse.}
  \label{fig:scissor-linkage}
\end{figure}

The relevance of polynomial factorization in mechanism science goes beyond above
simple examples (see for example
\cite{hegedus15,li15darboux,li2016spatial,rad18}). It provides a more or less
automatic way to construct linkages from rational motions. One example related
to a rational version of Kempe's Universality Theorem is depicted in
Figure~\ref{fig:scissor-linkage}. Any rational planar or spatial curve (an
ellipse in Figure~\ref{fig:scissor-linkage}) can be drawn by a scissor like
linkage whose number of joints is linear in the curve degree
\cite{gallet16,li17}.

\section{More Factorization Results and Examples}
\label{sec:more-results}

It is unsatisfactory that Theorem~\ref{th:factorization} and
Algorithm~\ref{alg:factorization} are limited to division rings only. A detailed
inspection of the proof of Theorem~\ref{th:factorization} shows that the
property of $R$ being a division ring guarantees existence of an unique zero of
the remainder polynomial $S$. However, as already demonstrated,
Algorithm~\ref{alg:factorization} may work in more general circumstances and
even if it fails, factorizations may exists. In this section, we present
miscellaneous existence and non-existence results for factorizations of
polynomials in a certain subset of a finite-dimensional associative real
involutive algebra whose involution $\gamma$ does not generally satisfy
Equation~\eqref{eq:1}, but $\gamma$ restricted to this subset does.

Note that Clifford algebras allow simple constructions of subsets that fall into
this category. We define a suitable involution $\gamma$ by defining
$\gamma(e_\ell) = e_\ell$ or $\gamma(e_\ell) = -e_\ell$ for $\ell \in
\{0,1,\ldots,n\}$ and then extended $\gamma$ to the complete Clifford algebra by
$\R$-linearity and the property $\gamma(ab) = \gamma(b)\gamma(a)$. The subset
$R^\gamma$ defined in Equation~\eqref{eq:2} then satisfies all general
assumptions of this section.

\subsection{Applicability of Algorithm~\ref{alg:factorization}}
\label{sec:applicability}

An obvious pre-requisite for Algorithm~\ref{alg:factorization} is that $\nu(C)$
is a non-zero real polynomial. The non-vanishing of $\nu(C)$ is guaranteed by
our restriction to monic polynomials (compare Remark~\ref{rem:2}). The crucial
property that then ensures applicability of (one iteration of)
Algorithm~\ref{alg:factorization} is that $S = \lrem(C,M)$ has a unique zero
($\czero(C,M)$ is well-defined). If this is the case,
Algorithm~\ref{alg:factorization} produces polynomials $C'$, $t-h$ \emph{which
  are again in $R^\gamma[t]$} (even if $S$ is not): By construction
$(t-h)\gamma(t-h) = M \in \R[t]$ whence $\nu(C')$ must be real as well. In
particular, $C'$ is suitable as input for a further iteration of
Algorithm~\ref{alg:factorization}. In order to have a convenient notion for the
vanishing of the remainder polynomial, we state the following definition.

\begin{defn}
  \label{def:pseudofactor}
  Given two polynomials $F$, $G \in R[t]$ where the leading coefficient of $G$
  is invertible, $G$ is called a \emph{left pseudofactor} of $F$, if $\rrem(F,
  G)$ has vanishing norm and a \emph{right pseudofactor} of $F$, if $\lrem(F, G)$
  has vanishing norm.
\end{defn}

Obviously, left and right factors are also left and right pseudofactors,
respectively. If a left pseudofactor is real then it is also a right
pseudofactor and vice versa. In this case we simply speak of a
\emph{pseudofactor}. With the help of the involution $\gamma$, real
pseudofactors can be found by factorizing $\nu(C)$:

\begin{thm}
  \label{thm:real-pseudofactors}
  A real pseudofactor of $C$ is a factor of $\nu(C)$.
\end{thm}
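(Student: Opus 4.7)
The plan is as follows. Since $G \in \R[t]$ and the leading coefficient of any real polynomial is automatically an invertible element of $R$, polynomial division in the sense of Theorem~\ref{th:polynomial-division} applies, yielding the unique decomposition $C = QG + S$ with $\deg S < \deg G$. Moreover, because $G$ is real it lies in the center of $R[t]$ (the coefficients of $G$ are central in $R$, and $t$ commutes with every coefficient by definition of the multiplication on $R[t]$), so the left and right remainders of $F$ by $G$ coincide. Thus the two notions of pseudofactor in Definition~\ref{def:pseudofactor} agree for real $G$, and the hypothesis translates to $S\gamma(S)=0$.

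The key computation is then direct. Applying $\gamma$ and using $\gamma(G)=G$ gives $\gamma(C) = G\gamma(Q) + \gamma(S)$, so
\begin{equation*}
  \nu(C) = C\gamma(C) = (QG+S)(G\gamma(Q) + \gamma(S)).
\end{equation*}
Expanding and pulling the central element $G$ out of the first three summands yields
\begin{equation*}
  \nu(C) = G\bigl(GQ\gamma(Q) + Q\gamma(S) + S\gamma(Q)\bigr) + S\gamma(S).
\end{equation*}
By the pseudofactor hypothesis, the last term vanishes, so $\nu(C) = G\cdot H$ with $H \coloneqq GQ\gamma(Q) + Q\gamma(S) + S\gamma(Q) \in R[t]$.

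Finally, I would verify that this factorization actually takes place in $\R[t]$. Since $\nu(C)\in\R[t]$ and $G\in\R[t]$ with invertible leading coefficient, real polynomial division produces a real quotient; by the uniqueness clause of Theorem~\ref{th:polynomial-division} this real quotient must equal $H$, so $H\in\R[t]$ and hence $G$ is a genuine real factor of $\nu(C)$. There is no essential obstacle here: everything reduces to the centrality of $\R[t]$ in $R[t]$ and a one-line expansion, the only subtlety being to notice up front that the left/right distinction in Definition~\ref{def:pseudofactor} collapses for real $G$, which is what legitimizes speaking of ``the'' remainder $S$.
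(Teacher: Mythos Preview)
Your argument is correct and is essentially identical to the paper's: both write $C = QG + S$, expand $\nu(C) = (QG+S)\gamma(QG+S)$ using centrality of $G$ and $\gamma(G)=G$, and observe that the only term not manifestly divisible by $G$ is $\nu(S)$, which vanishes by hypothesis. Your additional remarks---that the left/right remainders coincide for real $G$, and that the resulting quotient $H$ must lie in $\R[t]$ by uniqueness of division---are careful elaborations the paper leaves implicit, but the core computation is the same.
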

\begin{proof}
  If $M$ is a real pseudofactor of $C$, there exist $Q$, $S \in R[t]$ with $C =
  QM + S$, $\deg S < \deg M$, and $\nu(S) = 0$. But then
  \begin{multline*}
    \nu(C) = C\gamma(C) = (QM+S)\gamma(QM+S) \\
    = \nu(Q)M^2 + (Q\gamma(S)+S\gamma(Q))M + \underbrace{\nu(S)}_{=0}
    = (\nu(Q)M + Q\gamma(S) + S\gamma(Q))M
  \end{multline*}
  and $M$ is indeed a factor of~$\nu(C)$.
\end{proof}

As shown in \cite{hegedus13}, Algorithm~\ref{alg:factorization} works for
polynomials in an important subsemigroup of~$\DH[t]$ that have no real pseudofactors.

\begin{defn}
  A polynomial $C = P + \eps Q \in \DH[t]$ with $P$, $Q \in \H[t]$ is called a
  \emph{motion polynomial} if $\nu(C) \in \R[t]$ and $\nu(C) \neq 0$. It is
  called \emph{generic} if $\mrpf P = 1$.
\end{defn}

Motion polynomials form a subsemigroup of a special instance of the semigroup
constructed in Equation~\eqref{eq:3}. Hence, we may at least try to factor
motion polynomials by means of Algorithm~\ref{alg:factorization}. For generic
motion polynomials, which are exactly those motion polynomials that do not have
real pseudofactors, it is guaranteed to work:

\begin{lem}
  \label{lem:3}
  Let $C = P + \eps Q$ be a monic motion polynomial. If $M$ is a monic real
  quadratic factor of $\nu(C)$ but not a pseudofactor of $C$, then $\czero(C,M)$
  is well defined (that is, $S \coloneqq \lrem(C,M)$ has a unique zero).
\end{lem}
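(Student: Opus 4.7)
The plan is to follow the template of the proof of Lemma~\ref{lem:2} but carefully track the decomposition $\DH = \H + \eps\H$, since $\DH$ is not a division ring. Write $C = P + \eps Q$ with $P, Q \in \H[t]$; because $M \in \R[t]$ is central in $\DH[t]$, polynomial division splits across the dual components, so $S = \lrem(C,M) = \lrem(P,M) + \eps\,\lrem(Q,M)$. Setting $\lrem(P,M) = p_1 t + p_0$ and $\lrem(Q,M) = q_1 t + q_0$ with $p_i, q_i \in \H$ gives $S = s_1 t + s_0$ where $s_i = p_i + \eps q_i$. A dual quaternion $p + \eps q$ is invertible in $\DH$ exactly when $p \in \H^\times$, so it suffices to prove $p_1 \ne 0$: then $s_1^{-1}$ exists and $h \coloneqq -s_1^{-1}s_0$ is the unique zero of $S$.

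Expanding $C = QM + S$ as in the proof of Lemma~\ref{lem:2} and using centrality of $M$ yields
\begin{equation*}
  \nu(C) = \nu(Q)M^2 + (Q\gamma(S) + S\gamma(Q))M + \nu(S).
\end{equation*}
Since $M \mid \nu(C)$ in $\R[t]$, this forces $M \mid \nu(S)$ in $\DH[t]$. A direct computation gives $\nu(S) = \nu(p) + \eps(p\Cj{q} + q\Cj{p})$ where $p \coloneqq p_1 t + p_0$, $q \coloneqq q_1 t + q_0$, and $\nu(p) = p\Cj{p} \in \R[t]$ is the primal part. Divisibility by a real polynomial preserves the primal/dual split, so $M \mid \nu(p)$ in $\R[t]$; since $\deg\nu(p) \le 2 = \deg M$, there is a constant $c \in \R$ with $\nu(p) = cM$.

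If $c = 0$, then $\nu(p) = 0$ and, $\H$ being a division ring, $p = 0$; hence $S = \eps(q_1 t + q_0)$ and $\nu(S) = \eps^2\,\nu(q_1 t + q_0) = 0$ (using centrality of $\eps$ and $\gamma(\eps) = \eps$), contradicting the hypothesis that $M$ is not a pseudofactor. Therefore $c \ne 0$, and comparing leading coefficients in $\nu(p) = cM$ gives $N(p_1) = c \ne 0$, so $p_1 \in \H^\times$ and $s_1$ is invertible in $\DH$. The main obstacle the hypothesis is designed to remove is precisely this case distinction: in $\DH[t]$ non-vanishing of $\nu(S)$ is strictly weaker than non-vanishing of its primal part, and the pseudofactor assumption is what upgrades the former to the latter, delivering invertibility of the leading coefficient of~$S$.
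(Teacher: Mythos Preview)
Your proof is correct and follows essentially the same route as the paper's: both arguments reduce to showing that the leading coefficient of $S$ has invertible primal part by writing $\nu(S)$ as a (dual-number) multiple of $M$ and ruling out the case where that multiple lies in $\eps\R$. The only cosmetic difference is that the paper keeps the constant $c\in\D$ and argues invertibility of $c$, whereas you project to the primal part from the outset and work with a real constant; the contradiction ($p=0 \Rightarrow \nu(S)=0 \Rightarrow M$ is a pseudofactor) is identical.
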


\begin{proof}
  Because $M$ is not a pseudofactor of $C$ we have $\nu(S) \neq 0$. Similar to
  the proof of Lemma~\ref{lem:2} we conclude that there exist \emph{dual number}
  $c \in \D$ such that $\nu(S) = cM$ and $c \neq 0$. We claim that $c$ is
  invertible (in contrast to the proof of Lemma~\ref{lem:2} this is not implied
  b $c \neq 0$). Assume that $c$ is not invertible, that is, $c \in \eps\R$.
  This implies that $\nu(S) \in \eps\R[t]$ whence $M$ divides the primal part of
  $C$. But this is not possible because $M$ is assumed to be no pseudofactor of
  $C$. Hence $c \neq 0$ is invertible. It equals the norm of the leading
  coefficient of $S$ and this coefficient is invertible by a well-known property
  of dual numbers. Therefore, there exists a unique dual quaternion zero $h$
  of~$S$.
\end{proof}

\begin{cor}[\cite{hegedus13}]
  \label{cor:generic-motion-polynomial}
  A generic motion polynomial $C = P + \eps Q \in \DH[t]$ with $P$, $Q \in
  \H[t]$ admits a factorization.
\end{cor}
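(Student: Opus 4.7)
The plan is to prove the corollary by induction on $n \coloneqq \deg C$ (assuming $C$ monic as per the general convention, via the parameter transformation of Remark~\ref{rem:2} if necessary), peeling off one linear right factor at a time by means of Lemma~\ref{lem:3}. The base case $n = 1$ is immediate since $C$ is already monic and linear. For the inductive step, I would pick a monic real quadratic factor $M$ of $\nu(C)$ (such a factor exists because $\nu(C) \in \R[t]$ has degree $2n \ge 4$ and factors over $\R$ into linear and quadratic irreducibles), invoke Lemma~\ref{lem:3} to produce a right factor $t - h$ with $h = \czero(C, M)$, and then verify that $C' \coloneqq \lquo(C, t-h)$ is again a generic monic motion polynomial of degree $n-1$, so that the induction hypothesis yields a factorization.

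The main obstacle is to justify the hypothesis of Lemma~\ref{lem:3}, namely that $M$ is not a real pseudofactor of $C$; this is where genericity enters. Suppose for contradiction that some $M \in \R[t]$ of positive degree were a real pseudofactor, so $C = Q_0 M + S$ with $\deg S < \deg M$ and $\nu(S) = 0$. Writing $S = S_1 + \eps S_2$ with $S_1$, $S_2 \in \H[t]$ and expanding $S\gamma(S)$, the primal part of $\nu(S) = 0$ reads $S_1 \gamma(S_1) = 0$; but the leading coefficient of $S_1\gamma(S_1)$ is $\lvert\mathrm{lc}(S_1)\rvert^2$, so this forces $S_1 = 0$. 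Taking primal parts of $C = Q_0 M + S$ then gives $P = Q_0^{\mathrm{pr}} M$, making $M$ a real factor of $P$ of positive degree and contradicting $\mrpf P = 1$. Consequently the chosen quadratic $M$ is not a pseudofactor, Lemma~\ref{lem:3} supplies the unique zero $h$, and (as in the proof of Lemma~\ref{lem:2}) $t - h$ is a right factor of both $M$ and $S$, hence of $C$.

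It remains to check that $C' = \lquo(C,t-h)$ is again a generic monic motion polynomial. Monicity is clear since $C$ and $t-h$ are monic. As noted in the discussion preceding Definition~\ref{def:pseudofactor}, $(t-h)\gamma(t-h) = M \in \R[t]$ forces $\nu(C') = \nu(C)/M \in \R[t]\setminus\{0\}$, so $C'$ is a motion polynomial. Writing $h = a + \eps b$ and $C' = P' + \eps Q'$, comparison of primal parts of $C = C'(t-h)$ gives $P = P'(t-a)$; any real factor $R$ of $P'$ of positive degree is central and would thus satisfy $R \mid P$, contradicting $\mrpf P = 1$. Hence $\mrpf P' = 1$, $C'$ is generic, and the induction hypothesis completes the proof.
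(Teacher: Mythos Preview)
Your proof is correct and follows essentially the same inductive strategy as the paper: pick a quadratic real factor $M$ of $\nu(C)$, use Lemma~\ref{lem:3} to split off a linear right factor, and check that the quotient is again a generic motion polynomial. You supply more detail than the paper does---in particular, you explicitly verify that genericity ($\mrpf P = 1$) rules out real pseudofactors (the paper merely asserts this just before Lemma~\ref{lem:3}) and spell out why $C'$ inherits genericity---but the underlying argument is the same.
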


\begin{proof}
	We recap the inductive proof of Theorem~\ref{th:factorization} and show that
  the necessary conclusion can be obtained for a generic motion polynomial $C$.
  Again the base case is obvious. Consider a quadratic real factor $M$ of
  $\nu(C)$. By assumption it is not a pseudofactor of $C$ whence $h \coloneqq
  \czero(C,M)$ is well defined by Lemma~\ref{lem:3}. The considerations at the
  beginning of Section~\ref{sec:applicability} show that $t-h$ and $C' \coloneqq
  \lquo(C,t-h)$ are motion polynomials. If $C'$ was not generic, the same would
  be true for $C$, contrary to our assumption. Hence the induction hypothesis
  can be applied to $C'$ and the proof is complete.
\end{proof}

\begin{rmk}
  Algorithm~\ref{alg:factorization} can be used to factor motion polynomials as
  long as $\czero(C,M)$ is well-defined. In the generic case this is guaranteed.
\end{rmk}

Factorization results for non-generic motion polynomials and non-motion
polynomials will be discussed later in Sections~\ref{sec:non-generic}
and~\ref{sec:projection}, respectively. We conclude this section with an example
to demonstrate that success of Algorithm~\ref{alg:factorization} for a split
quaternion polynomial $C$ may depend on the order of quadratic factors of
$\nu(C)$.

\begin{example}
  \label{ex:10}
  The polynomial
  \begin{multline*}
    C = t^4 - (\hi - 3\hj + 2\hk + 9)t^3 + (7\hi - 12\hj + 33\hk + 43)t^2\\
    - (82\hi - 59\hj + 146\hk + 38)t + 162\hi - 188\hj + 213\hk - 103
  \end{multline*}
  admits the factorization $C = (t-h_1)(t-h_2)(t-h_3)(t-h_4)$ where
  \begin{gather*}
    h_1 = 3\hi+\tfrac{21}{2}\hj+\tfrac{23}{2}\hk+2,\quad
    h_2 = -\tfrac{91}{51}\hi-\tfrac{2151}{221}\hj-\tfrac{6791}{663}\hk+2,\\
    h_3 = \tfrac{91}{51}\hi-\tfrac{2667}{884}\hj-\tfrac{6649}{2652}\hk+2,\quad
    h_4 = -2\hi-\tfrac{3}{4}\hj+\tfrac{13}{4}\hk+3
  \end{gather*}
  This factorization can be computed by Algorithm~\ref{alg:factorization}. With
  \begin{equation*}
    M_1 \coloneqq t^2 - 6t + 15,\quad
    M_2 \coloneqq t^2 - 4t - 2,\quad
    M_3 \coloneqq t^2 - 4t + 11,\quad
    M_4 \coloneqq t^2 - 4t + 17
  \end{equation*}
  we have $\nu(C) = M_1M_2M_3M_4$ and
  \begin{equation*}
    \begin{aligned}
      h_4 &= \czero(C,M_1),\\
      h_3 &= \czero(C', M_2)\quad\text{where}\quad C' = \lquo(C,t-h_4),\\
      h_2 &= \czero(C'',M_3)\quad\text{where}\quad C'' = \lquo(C',t-h_3),\\
      h_1 &= t - \lquo(C'',t-h_2).
    \end{aligned}
  \end{equation*}
  A different order of quadratic factors may not work. With $k_4 = -\hi - \hj +
  3\hk + 2 = \czero(C,M_3)$ we have
  \begin{multline*}
    C' \coloneqq \lquo(C, t-k_4)\\
    = t^3-(2\hi-2\hj-\hk+7)t^2+(17\hi+4\hj+10\hk+26)t-52\hi-20\hj-35\hk-37
  \end{multline*}
  but
  \begin{equation*}
    S \coloneqq \lrem(C',M_1) = (5\hi+16\hj+16\hk+5)t-22\hi-50\hj-50\hk-22
  \end{equation*}
  and $\nu(S) = 0$. Thus $M_1$ is a pseudofactor of $C'$ but not of $C$.
  Algorithm~\ref{alg:factorization} with this particular ordering of quadratic
  factors of $\nu(C)$ does not work.
\end{example}

\subsection{Factorization of Quadratic Split Quaternion Polynomials}

As demonstrated in Example~\ref{ex:4}, not all monic polynomials in $\HH[t]$
admit factorizations. Here, we present a sufficient criterion for
factorizability of quadratic polynomials in $\HH[t]$. It relates existence of
factorizations with the geometry of the projective space $P(\HH)$ over the
vector space $\HH$. Given a split quaternion $x \in \HH$ we denote the
corresponding point in $P(\HH)$ by $[x]$. Projective span is denoted by the
symbol ``$\vee$''.

\begin{defn}
  The quadric $\NQ$ in $P(\HH)$ given by the bilinear form $q\colon \HH \times
  \HH \to \R$, $(x,y) \mapsto x\Cj{y} + y\Cj{x}$ is called the \emph{null
    quadric.} A straight line contained in $\NQ$ is called a \emph{null line.}
\end{defn}

A point $[x]$ lies on the null quadric $\NQ$ if and only if $\nu(x)$ vanishes. It
is easy to see (Lemma~\ref{lem:left-right-ruling} below) that $\NQ$ is of
hyperbolic type and contains two families of lines. In particular, null lines do
exist.

\begin{thm}
  \label{th:quadratic-split}
  A quadratic polynomial $C = c_2t^2 + c_1t + c_0 \in \HH[t]$ with invertible
  leading coefficient $c_2$ admits a factorization if the vectors $c_0$, $c_1$
  and $c_2$ are linearly independent.
\end{thm}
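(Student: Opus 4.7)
The plan is to produce a right zero $h\in\HH$ of $C$; once such $h$ exists, $C=c_2(t-(-c_2^{-1}c_1-h))(t-h)$ is a factorisation. Normalise $c_2=1$ by replacing $C$ with $c_2^{-1}C$ (linear independence of the coefficients is preserved). The main mechanism is the argument behind Algorithm~\ref{alg:factorization} via Lemma~\ref{lem:2}: for any monic real quadratic divisor $M=t^2+\pi t+q$ of the real quartic $\nu(C)\in\R[t]$, set $S\coloneqq\lrem(C,M)=(c_1-\pi)t+(c_0-q)$. A direct computation gives $\nu(S)=\nu(c_1-\pi)\cdot M$, so if $\nu(c_1-\pi)\neq 0$ then by Lemma~\ref{lem:2} the polynomial $S$ has the unique zero $h=-(c_1-\pi)^{-1}(c_0-q)$ and $t-h$ is a right factor of $C$.

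Call $\pi\in\R$ \emph{bad} if $\nu(c_1-\pi)=\pi^2-2\Re(c_1)\pi+\nu(c_1)=0$. This is a monic real quadratic in $\pi$, so at most two values of $\pi$ are bad; geometrically, the bad $\pi$ correspond to the intersection points of the projective line $[1]\vee[c_1]$ with the null quadric $\NQ$, which are finite in number since $[1]\notin\NQ$. It therefore suffices to exhibit a monic real quadratic divisor of $\nu(C)$ whose coefficient $\pi$ is not bad. This is immediate in generic cases --- for instance, when $\nu(C)$ has four distinct real roots it admits six real quadratic divisors with six distinct values of $\pi$, of which at most two can be bad.

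The crux is the remaining case where \emph{every} monic real quadratic divisor $M_j=t^2+\pi_j t+q_j$ of $\nu(C)$ is bad, equivalently, is a real pseudofactor of $C$ in the sense of Definition~\ref{def:pseudofactor}. The vanishing of $\nu(S_j)$ then forces $n_j\coloneqq c_1-\pi_j\in\NQ$, $m_j\coloneqq c_0-q_j\in\NQ$, and $n_j\Cj{m_j}+m_j\Cj{n_j}=0$; equivalently, $[n_j]\vee[m_j]$ is a null line of $\NQ$. I would carry the analysis inside the projective plane $\Pi\coloneqq[c_0]\vee[c_1]\vee[1]$, which is genuinely two-dimensional by the linear-independence hypothesis, and exploit the two rulings of the hyperbolic quadric $\NQ$ (cf.~Lemma~\ref{lem:left-right-ruling}) to control the configuration of the null lines $[n_j]\vee[m_j]$ inside the conic $\Pi\cap\NQ$. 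This should allow one to solve the underdetermined equation $(c_1-\pi)h=q-c_0$ (whose left-multiplication map has two-dimensional kernel because $c_1-\pi$ is null) subject to the class constraints $2\Re(h)=-\pi$ and $\nu(h)=q$, producing the desired zero of $C$. The main obstacle is precisely this degenerate-pseudofactor case: Example~\ref{ex:4}, where $1$, $c_1=0$, $c_0=2\hi$ are linearly dependent and $C=t^2+2\hi$ admits no factorisation, shows that linear dependence can genuinely obstruct existence of a zero, so the linear-independence hypothesis must enter essentially at this step.
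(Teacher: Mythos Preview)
Your setup and the non-degenerate case (some monic real quadratic divisor $M$ of $\nu(C)$ with $\nu(\lrem(C,M))\neq 0$) are correct and match the paper's argument via Lemma~\ref{lem:2}. The counting of bad $\pi$-values is a pleasant observation but, as you acknowledge, does not eliminate the degenerate case.

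The genuine gap is in the pseudofactor case, which you sketch but do not carry out. Two things are missing. First, for a fixed bad pair $(\pi,q)$ the equation $(c_1-\pi)h = q - c_0$ need not be solvable at all: since $c_1-\pi$ is null, left multiplication by it has two-dimensional image, and $q-c_0$ may lie outside it. The paper resolves this by observing that if $M_1$ is a pseudofactor with remainder $S_1 = \lrem(C,M_1)$, then the complementary factor is $M_2 = M_1 + S_1 + \Cj{S_1}$ with remainder $S_2 = -\Cj{S_1}$, and Lemma~\ref{lem:left-right-ruling} yields the dichotomy that \emph{exactly one} of $S_1,S_2$ admits right zeros: right zeros of $S_1$ exist iff the null line $[s_0]\vee[s_1]$ lies in the right ruling, while right zeros of $S_2$ exist iff it lies in the left ruling. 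Your phrase ``exploit the two rulings'' points in this direction, but you never state or use this dichotomy, and your plane-$\Pi$ formulation does not obviously produce it. Second, once the two-parameter family of right zeros of the appropriate $S_j$ is in hand, one must still verify that it meets the locus $\{h : 2\Re(h)=-\pi_j,\ \nu(h)=q_j\}$, i.e.\ that some right zero of $S_j$ is also a right zero of $M_j$. The paper does this by an explicit computation (Equations~\eqref{eq:8}--\eqref{eq:99}), finding a unique such $h$ and showing that the linear-independence hypothesis is precisely what keeps the denominator $\alpha_0\beta_1-\alpha_1\beta_0$ from vanishing. Your proposal asserts that the constraints ``should'' be satisfiable but supplies no argument.
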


\begin{lem}
  \label{lem:line-quadric}
  The linear polynomial $S = s_1t + s_0 \in \HH[t]$ with linearly independent
  coefficients $s_0$ and $s_1$ satisfies $S\Cj{S} = 0$ if and only if the
  straight line $[s_0] \vee [s_1]$ is a null line.
\end{lem}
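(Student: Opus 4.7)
The plan is to show that both sides of the equivalence reduce to the same system of three real scalar equations, namely $\nu(s_0) = \nu(s_1) = 0$ and $q(s_0,s_1) = 0$.

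First I would compute $S\Cj{S}$ explicitly. Since the extended involution acts coefficient-wise, $\Cj{S} = \Cj{s_1}\,t + \Cj{s_0}$, and multiplying out according to the polynomial product rule yields three coefficients: $s_0\Cj{s_0}$, $s_0\Cj{s_1}+s_1\Cj{s_0}$, and $s_1\Cj{s_1}$. By the hypothesis \eqref{eq:1} that the restriction of $\gamma = \Cj{}$ to $\HH$ (or the relevant semigroup) satisfies $\nu(a) \in \R$, the outer coefficients are the real numbers $\nu(s_0)$ and $\nu(s_1)$; the middle coefficient is, by the very definition of the bilinear form in the statement of the null quadric, $q(s_0,s_1)$ and is likewise real. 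Hence $S\Cj{S} = 0$ in $\HH[t]$ is equivalent to the simultaneous vanishing of these three reals.

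Next I would translate the null-line condition into the same scalars. Thanks to linear independence of $s_0$ and $s_1$, the join $[s_0]\vee[s_1]$ is a genuine projective line, parameterized by $[\lambda s_0 + \mu s_1]$ with $(\lambda,\mu) \in \R^2\setminus\{(0,0)\}$. The line is a null line iff $\nu(\lambda s_0+\mu s_1) = 0$ for all real $\lambda,\mu$. Expanding this norm with the same calculation as above (now with $\lambda,\mu$ playing the role of scalar multipliers) gives
\[
  \nu(\lambda s_0 + \mu s_1) = \lambda^2\nu(s_0) + \lambda\mu\, q(s_0,s_1) + \mu^2\nu(s_1).
\]
Because the monomials $\lambda^2$, $\lambda\mu$, $\mu^2$ are linearly independent in $\R[\lambda,\mu]$, vanishing for all $(\lambda,\mu) \in \R^2$ is equivalent to the three coefficients vanishing individually, which is exactly the system obtained from $S\Cj{S} = 0$.

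The argument is essentially a coefficient comparison, so no real obstacle is anticipated. The only subtlety is the role of linear independence of $s_0$ and $s_1$: it is used solely to ensure that $[s_0]\vee[s_1]$ is a bona fide projective line (so that ``null line'' makes sense on the right-hand side) and that the parametrization $(\lambda,\mu)\mapsto[\lambda s_0+\mu s_1]$ is faithful.
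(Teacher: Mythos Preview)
Your proposal is correct and follows essentially the same approach as the paper: compute $S\Cj{S}$ coefficient-wise and identify the vanishing of the three coefficients with the condition that the line $[s_0]\vee[s_1]$ lies on $\NQ$. The paper simply asserts the last step as the standard characterization of a line contained in a quadric, whereas you spell it out via the parametrization $\lambda s_0+\mu s_1$; this extra detail is fine but not a different method.
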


\begin{proof}
  Because of $S\Cj{S} = s_1\Cj{s}_1t^2 + (s_1\Cj{s_0}+s_0\Cj{s_1})t +
  s_0\Cj{s_0}$ we have $S\Cj{S} = 0$ if and only if $q(s_0,s_0) = q(s_0,s_1) =
  q(s_1,s_1) = 0$. This is precisely the condition for the straight line $[s_0]
  \vee [s_1]$ to be contained in the quadric~$\NQ$.
\end{proof}

\begin{lem}
  \label{lem:left-right-ruling}
  The quadric $\NQ$ contains two families of lines (the \emph{left} and the
  \emph{right family}) which are distinguished by the following property: For
  any two points $[p_1]$, $[q_1]$ on a line of the left family, there exists
  $r_1 \in \HH$ such that $q_1 = r_1p_1$. For any two points $[p_2]$, $[q_2]$ on
  a line of the right family, there exists $r_2 \in \HH$ such that $q_2 =
  p_2r_2$.
\end{lem}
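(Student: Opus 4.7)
From the explicit norm $\nu(r) = a_0^2 - a_1^2 - a_2^2 + a_3^2$ in Section~\ref{sec:clifford-algebras}, the bilinear form $q$ has signature $(2,2)$, so $\NQ$ is a non-degenerate hyperbolic quadric in $P(\HH) \cong P^3(\R)$. A standard projective-geometric fact then guarantees that $\NQ$ carries exactly two one-parameter families of lines, with exactly one line of each family through any point of $\NQ$. My task is to identify these two rulings algebraically with principal left and right ideals in $\HH$.

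I first check that $\nu$ is multiplicative: since $\nu(y)\in\R$ lies in the centre, $\nu(xy) = xy\gamma(y)\gamma(x) = x\nu(y)\gamma(x) = \nu(x)\nu(y)$. Consequently, for every nonzero null $p \in \HH$ both the left ideal $\HH p$ and the right ideal $p\HH$ sit inside the null cone. The next step is to show $\dim_\R \HH p = \dim_\R p\HH = 2$; their projectivizations are then projective lines inside $\NQ$. This is most transparent through the classical isomorphism $\HH \cong M_2(\R)$ (which exists because $\nu$ is a non-degenerate real quadratic form of signature $(2,2)$), under which $\nu$ becomes the determinant and nonzero null elements correspond to rank-one matrices $p = uv^\tp$; then $M_2(\R)\cdot uv^\tp = \{wv^\tp : w\in\R^2\}$ and $uv^\tp \cdot M_2(\R) = \{uz^\tp : z\in\R^2\}$ are each two-dimensional.

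I would then introduce the two families
\begin{equation*}
  \mathcal L \coloneqq \{[\HH p] \mid [p] \in \NQ\}, \qquad
  \mathcal R \coloneqq \{[p\HH] \mid [p] \in \NQ\},
\end{equation*}
and verify the property in the lemma. For any nonzero null $p_1$ lying in $\HH p$, the ideal $\HH p_1$ is a two-dimensional subspace of the two-dimensional space $\HH p$, hence equal to it; so for any $[q_1]\in [\HH p]$ there exists $r_1\in\HH$ with $q_1 = r_1p_1$, and symmetrically for $\mathcal R$. Distinctness of $\mathcal L$ and $\mathcal R$ follows because under $\HH\cong M_2(\R)$ a line of $\mathcal L$ is characterized by fixed row space $\langle v\rangle$ while a line of $\mathcal R$ is characterized by fixed column space $\langle u\rangle$, and these are genuinely different pencils of rank-one matrices. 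Since $\mathcal L$ and $\mathcal R$ are one-parameter families (parameterized by $[v]$ and $[u]$ in $P^1$) that each cover $\NQ$, they must coincide with the two projective rulings established at the start.

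The principal obstacle is the dimension count $\dim \HH p = 2$ and the clean distinction between the two families; the $M_2(\R)$-isomorphism dispatches both at once, whereas a purely split-quaternion computation via annihilators would be possible but considerably more laborious.
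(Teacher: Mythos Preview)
Your argument is correct and takes a genuinely different route from the paper's. The paper proceeds by brute force: it writes down an explicit trigonometric parametrization of the two rulings (lines $[a]\vee[b]$ with $a = 1+\cos\varphi\,\hi+\sin\varphi\,\hj$, $b = -\sin\varphi\,\hi+\cos\varphi\,\hj + e\hk$, $e=\pm1$), and for a general point $c=\alpha a+\beta b$ on such a line solves the linear systems $xa=c$ and $ax=c$ in coordinates, verifying case by case which of the two is solvable depending on the sign of $e$. Your approach is ideal-theoretic: you observe that the multiplicativity of $\nu$ forces $\HH p$ and $p\HH$ to lie in the null cone, and then use the classical isomorphism $\HH\cong M_2(\R)$ (with $\nu\leftrightarrow\det$) to read off the dimensions and to separate the two rulings via fixed row space versus fixed column space.

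Your route is cleaner and more conceptual, and it explains \emph{why} the left/right distinction arises rather than merely confirming it by calculation. The paper's explicit parametrization, on the other hand, is not wasted effort: the formulas \eqref{eq:7} for $a$, $b$ and the explicit solution set of $s_1h+s_0=0$ are reused verbatim in the proof of Theorem~\ref{th:quadratic-split} (see \eqref{eq:8} and \eqref{eq:99}). If you adopt your approach for the lemma, you will need to redo that part of the later proof in the matrix model or else supply the parametrization separately at that point.
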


\begin{proof}
  With $x = x_0 + x_1\hi + x_2\hj + x_3\hk$ we have $\frac{1}{2}q(x,x) = x_0^2 -
  x_1^2 - x_2^2 + x_3^2$. Hence, the quadric $\NQ$ is of hyperbolic type and,
  indeed, carries two families of rulings. These are given as $[a] \vee [b]$
  where
  \begin{equation}
    \label{eq:7}
    a = 1 + \cos\varphi \hi + \sin\varphi \hj,\quad
    b = -\sin\varphi \hi + \cos\varphi \hj + e \hk,
  \end{equation}
  $\varphi \in [0,2\pi)$ and $e = 1$ or $e = -1$. Any point on $[c] \in [a] \vee [b]$ can be written as
  $c = \alpha a + \beta b$ and it suffices to discuss solvability of the
  equations $ax = c$ and $xa = c$. Since both equations are linear in the
  coefficients of $x$, a straight-forward calculation yields the solution
  \begin{equation*}
    x = (\alpha - x_1\cos\varphi - x_2\sin\varphi) + x_1\hi + x_2\hj + (\beta + x_1\sin\varphi - x_2\cos\varphi)\hk
  \end{equation*}
  of $\{ e = 1, xa = c\}$ and the solution
  \begin{equation*}
    x = (\alpha - x_1\cos\varphi - x_2\sin\varphi) + x_1\hi + x_2\hj - (\beta + x_1\sin\varphi - x_2\cos\varphi)\hk
  \end{equation*}
  of $\{ e = -1, ax = c \}$. The systems $\{ e = 1, xa = c \}$ and $\{ e = -1, ax
  = c \}$ have no solution.
\end{proof}

\begin{proof}[Proof of Theorem~\ref{th:quadratic-split}]
  As usual, it is sufficient to prove the statement for monic polynomials, that
  is, $c_2 = 1$. We pick a monic quadratic factor $M_1$ of $\nu(C)$. The
  remainder polynomial $S_1\coloneqq \lrem(C,M_1)$ is (at most) of degree one
  and we can write $S_1 = s_1t + s_0$. If $\nu(S_1) \neq 0$, $h \coloneqq
  \czero(C, M_1)$ is well defined by Lemma~\ref{lem:2}. This means that one
  iteration of Algorithm~\ref{alg:factorization} can be applied to $C$ to obtain
  a right factor $t-h$ and consequently a factorization, even if $\HH$ is not a
  division ring. (The division ring property in the proof of
  Theorem~\ref{th:factorization} allows to conclude $\nu(S_1) \neq 0$ which is
  an assumption at this point.)

  Next we consider the remaining case $\nu(S_1) = 0$. If the coefficients $s_1$
  and $s_0$ are linearly dependent, the coefficients of $C$ are linearly
  dependent as well. Hence, $s_1$ and $s_0$ are linearly independent and we may
  assume that $S_1$ parameterizes a null line. With $M_2 \coloneqq M_1 + S_1 +
  \Cj{S_1}$ and $S_2 \coloneqq -\Cj{S_1}$ we have $\nu(C) = M_1M_2$ and $C = M_1
  + S_1 = M_2 + S_2$. By Lemma~\ref{lem:left-right-ruling}, $S_1$ or $S_2$ have
  a right zero. Lets assume, without loss of generality, that $S_1$ has this
  property. By a parameter transformation $t \mapsto t + u$ with a suitable $u
  \in \R$ we can ensure that $M_1$ is of the form $M_1 = t^2 + m$ with $m \in
  \R$. We have to show that there is a common right zero of $M_1$ and $S_1$. The
  right zeros of $S_1$ can be computed similarly as in the proof of
  Lemma~\ref{lem:left-right-ruling}: There exist $\alpha_0$, $\beta_0$,
  $\alpha_1$, $\beta_1 \in \R$ such that
  \begin{equation*}
    s_0 = \alpha_0 a + \beta_0 b\quad\text{and}\quad
    s_1 = \alpha_1 a + \beta_1 b
  \end{equation*}
  with $a$, $b$ as in \eqref{eq:7} with $e = -1$. With $h = h_0 + h_1\qi +
  h_2\qj + h_3\qk$, the solution to $s_1h + s_0 = 0$ is given by
  \begin{equation}
    \label{eq:8}
    \begin{aligned}
    h_0 &= \frac{-1}{\alpha_1^2+\beta_1^2}(
        \alpha_0\alpha_1+\beta_0\beta_1
        +((\alpha_1^2-\beta_1^2)\cos\varphi - 2\alpha_1\beta_1 \sin\varphi)h_1
        +(2\alpha_1\beta_1\cos\varphi + (\alpha_1^2-\beta_1^2)\sin\varphi)h_2
    ),\\
    h_3 &= \frac{1}{\alpha_1^2+\beta_1^2}
    (
        \alpha_1\beta_0 -\alpha_0\beta_1
        -(2\alpha_1\beta_1 \cos\varphi + (\alpha_1^2 - \beta_1^2)\sin\varphi)h_1
        +((\alpha_1^2 - \beta_1^2)\cos\varphi - 2\alpha_1\beta_1\sin\varphi)h_2
    )
    \end{aligned}
  \end{equation}
  with arbitrary real numbers $h_1$, $h_2$. A straightforward calculation shows
  that there is precisely one right zero of $M_1$ in this solution set. It is
  given by
  \begin{equation}
    \label{eq:99}
    h = \frac{1}{2(\alpha_0\beta_1-\alpha_1\beta_0)}(h_1\qi + h_2\qj + h_3\qk)
  \end{equation}
  where
  \begin{equation*}
    \begin{aligned}
    h_1 &= ((\alpha_1^2-\beta_1^2)m+\alpha_0^2 - \beta_0^2)\sin\varphi+2(\alpha_1\beta_1m + \alpha_0\beta_0)\cos\varphi,\\
    h_2 &= ((\beta_1^2-\alpha_1^2)m-\alpha_0^2+\beta_0^2)\cos\varphi+2(\alpha_1\beta_1m+\alpha_0\beta_0)\sin\varphi,\\
    h_3 &= -\alpha_0^2-\beta_0^2 - (\alpha_1^2+\beta^2_1)m.
    \end{aligned}
  \end{equation*}
  Note that the denominator of \eqref{eq:99} does not vanish because otherwise
  the coefficients of $S_1$ and consequently also the coefficients of $C$ would
  be linearly dependent.

  The quaternion $h$ is a common right zero of $M_1$ and $S_1$. By
  Theorem~\ref{th:zero-factor}, $t-h$ is a right factor of $M_1$ and $S_1$ and
  hence also of $C = M_1 + S_1$. This implies existence of a factorization.
  
\end{proof}

\begin{example}
  We illustrate the ``interesting'' case in the proof of
  Theorem~\ref{th:quadratic-split} by an example. Consider the polynomial $C =
  t^2 + (1+\hi)t + 1 + \hj - \hk \in \HH[t]$. We have $C\Cj{C} = M_1M_2$ with
  $M_1 = t^2 + 1$ and $M_2 = (t + 1)^2$. Polynomial division yields $C = M_1 +
  S_1 = M_2 + S_2$ with
  \begin{equation*}
    S_1 = (1+\hi)t + \hj - \hk
    \quad\text{and}\quad
    S_2 = (-1+\hi)t + \hj - \hk.
  \end{equation*}
  Note that $\nu(S_1) = \nu(S_2) = 0$. The remainder $S_2$ has no right zeros,
  while the right zeros of $S_1$ are of the form
  \begin{equation*}
    h = -h_1 + h_1\hi + h_2\hj + (1 + h_2)\hk,
    \quad h_1, h_2 \in \R.
  \end{equation*}
  The unique right zero $h = \hk$ of $M_1$ among these solutions is obtained for
  $h_1 = h_2 = 0$. Indeed, we have the factorization $C = (t + 1 + \hi + \hk)(t
  - \hk)$.
\end{example}

\subsection{Factorization of Non-Generic Motion Polynomials}
\label{sec:non-generic}

We have already mentioned (and proved) the result of \cite{hegedus13} on
existence of factorizations of generic motion polynomials. These are polynomials
$C = P + \eps Q \in \DH[t]$ with $P$, $Q \in \H[t]$ such that $\mrpf P = 1$ and
$\nu(C) \neq 0$. If $\mrpf P \neq 1$, general criteria on existence of
factorizations are difficult to formulate. However, we would like to mention
recent results by \cite{li17,li18} that ensure existence of factorizations for
suitable multiples of not necessarily generic but bounded motion polynomials.

\begin{defn}
  A motion polynomial $C = P + \eps Q$ with $P$, $Q \in \H[t]$ is called
  \emph{bounded} if $\mrpf P$ has no real zeros and \emph{unbounded} otherwise.
\end{defn}

The name ``bounded'' comes from the fact that all trajectories of a bounded
motion polynomials are bounded rational curves.

\begin{thm}[\cite{li17,li18}]
  Consider a bounded monic motion polynomial $C = P + \eps Q \in \DH[t]$ with
  $P$, $Q \in \H[t]$.
  \begin{itemize}
  \item There exists a polynomial $S \in \R[t]$ of degree $\deg S \le \deg\mrpf
    P$ such that $CS$ admits a factorization.
  \item If $\gcd(P,\nu(Q)) = 1$ there exists a polynomial $D \in \H[t]$ of
    degree $\deg D = \frac{1}{2}\deg\mrpf P$ such that $CD$ admits a
    factorization.
  \end{itemize}
\end{thm}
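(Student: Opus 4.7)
My plan is to establish both statements by induction on $n := \deg \mrpf P$, at each step extracting one linear dual-quaternion right factor whose norm is an irreducible quadratic factor of $\mrpf P$ and reducing to a smaller bounded motion polynomial. The base case $n = 0$ is immediate: $C$ is already generic and Corollary~\ref{cor:generic-motion-polynomial} produces a factorization, so we may take $S = 1$ (respectively $D = 1$).

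For the inductive step of the first bullet, pick a monic irreducible real quadratic factor $M$ of $\mrpf P$; by boundedness $M$ has no real roots. Note that $M$ is a pseudofactor of $C$: since $M \mid P$, the remainder $\lrem(C,M) = \eps\lrem(Q,M)$ lies in $\eps\H[t]$ and has vanishing norm, so Algorithm~\ref{alg:factorization} does not directly apply to $C$ with this $M$. The key observation is that multiplying $C$ on the right by $M$ unlocks many linear right factors: for a quaternion root $h_p \in \H$ of $M$ and any $h_d \in \H$, a direct computation shows that $C(h), M(h) \in \eps\H$ for $h := h_p + \eps h_d$, whence $(CM)(h) = C(h)M(h) \in \eps^2 \H = 0$, confirming $(t-h)$ as a right factor of $CM$. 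Restricting $h_d$ to the two real conditions $\text{Re}(h_d) = 0$ and $\text{Re}(h_p\gamma(h_d)) = 0$ (which ensure $\nu(t-h) = M \in \R[t]$) leaves two real parameters free; I would choose these so that the quotient $C_1 := \lquo(CM,t-h)$ is a bounded monic motion polynomial whose primal part has $\mrpf$ of degree $n-2$. Applying the induction hypothesis to $C_1$ produces $S_1 \in \R[t]$ of degree at most $n-2$ with $C_1 S_1$ factoring; centrality of $M$ and $S_1$ then gives $C(MS_1) = (C_1 S_1)(t-h)$, which factors into linear dual-quaternion factors, and $S := MS_1$ has degree at most $n$.

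For the second bullet, the coprimality $\gcd(\mrpf P, \nu(Q)) = 1$ permits replacing the real degree-two multiplier $M$ at each inductive step with a single linear quaternion factor $t - d$ of norm $M$. The set of such $d \in \H$ is a two-sphere in the three-sphere $\{h \in \H : \nu(h) = \text{const}\}$, supplying the same two real parameters as in the first bullet but now concentrated in a degree-one multiplier. Iterating over the $m = \tfrac12\deg\mrpf P$ irreducible quadratic factors of $\mrpf P$ and stringing the chosen $d_i$ together produces $D := (t-d_1)(t-d_2)\cdots(t-d_m)$ of degree $m = \tfrac12\deg\mrpf P$, matching the claimed bound, such that $CD$ admits a factorization.

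The principal obstacle in both cases is to verify that the two free parameters in $h_d$ (respectively the two parameters in $d$) can be chosen so that the quotient has $\mrpf$ of its primal part strictly smaller by two. This reduces to checking that a certain linear map $\H \to \H$ coming from the primal part $P$ modulo $M$, perturbed by the dual part $Q$ modulo $M$, is surjective. Boundedness ensures $\R[t]/(M) \cong \C$ is a field, so the ambient algebra $\H \otimes_\R \C$ is non-degenerate; for the second bullet the sharper hypothesis $\gcd(\mrpf P, \nu(Q)) = 1$ provides the extra invertibility needed to accomplish the reduction with a single quaternion parameter rather than a real quadratic, explaining the halving of the degree bound.
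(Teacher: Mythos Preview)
The paper does not give its own proof of this theorem; it is quoted from \cite{li17,li18} with the explicit remark that ``the algorithm of \cite{li18} for computing the co-factor $D$ is too complicated to be discussed here.'' So there is no in-paper argument to compare against, and your proposal has to stand on its own.

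Your inductive scheme for the first bullet breaks at the step where you claim the two free real parameters in $h_d$ can be chosen so that $C_1 \coloneqq \lquo(CM, t-h)$ has primal part with $\mrpf$ of degree $n-2$. The primal part of $C_1$ is determined solely by the primal parts of $CM$ and of $t-h$, namely $PM$ and $t-h_p$; it does not depend on $h_d$ at all. Concretely, from $CM = C_1(t-h)$ one reads off $\operatorname{primal}(C_1)\,(t-h_p) = PM$, and since $M = (t-\Cj{h_p})(t-h_p)$ this gives $\operatorname{primal}(C_1) = P\,(t-\Cj{h_p})$. Because $\mrpf P$ still divides $P\,(t-\Cj{h_p})$, we have $\deg \mrpf\bigl(\operatorname{primal}(C_1)\bigr) \ge n$, not $n-2$. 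Thus the ``linear map'' you allude to in the final paragraph, viewed as a map from the $h_d$-parameters to anything controlling $\mrpf$ of the primal part, is constant---surjectivity is not the issue. Extracting a single linear right factor from $CM$ raises the overall degree by one while leaving the offending real factor of the primal part intact, so the induction does not close. The actual construction in \cite{li18} uses a more delicate mechanism that genuinely consumes a quadratic factor of $\mrpf P$ at each stage.

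The sketch for the second bullet inherits the same defect and is, in addition, too vague to evaluate: you neither exhibit a linear right factor of $C(t-d)$ nor explain how the choice of $d$ on its two-sphere forces the quotient's primal $\mrpf$ to drop. The hypothesis $\gcd(P,\nu(Q))=1$ enters the argument of \cite{li18} in a substantive way, not merely as a parameter count, which is consistent with the paper's warning that the algorithm is too involved to reproduce here.
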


The algorithm of \cite{li18} for computing the co-factor $D$ is too complicated
to be discussed here. We confine ourselves to a simple example and remark that
some aspects of this factorization algorithm are used in our proof of
Theorem~\ref{th:unbounded} below.

\begin{example}
  \label{ex:multiplication-trick}
  Consider the polynomial $C = t^2 + 1 + \eps\qi$. As mentioned in
  Example~\ref{ex:9}, it admits no factorization with motion polynomial factors.
  But with $S = t^2+1$ and $D = t - \qk$ we have
  \begin{gather*}
    CS = 
    (t+\tfrac{3}{5}\qj-\tfrac{4}{5}\qk)
    (t - \tfrac{3}{5}\qj+\tfrac{4}{5}\qk + \eps(\tfrac{2}{5}\qj+\tfrac{3}{10}\qk))
    (t - \tfrac{3}{5}\qj+\tfrac{4}{5}\qk - \eps(\tfrac{2}{5}\qj + \tfrac{3}{10}\qk))
    (t+\tfrac{3}{5}\qj-\tfrac{4}{5}\qk),\\
    CD = (t + \qk)
    (t - \qk - \tfrac{1}{2}\eps\qj)
    (t - \qk + \tfrac{1}{2}\eps\qj).
  \end{gather*}
\end{example}

Above results state that existence of a motion polynomial factorization can be
guaranteed after multiplication with a real polynomial (which does not change
the underlying motion) or with a quaternion polynomial (which does not change
the trajectory of the origin). In \cite{gallet16} and \cite{li17} this was used
for the construction of linkages with a prescribed bounded rational trajectory
(Figure~\ref{fig:scissor-linkage}).

\subsection{Factorization of Unbounded Motion Polynomials}

If $C$ is an unbounded motion polynomial, existence of a factorization is not
guaranteed, not even after multiplication with a real polynomial $S \in \R[t]$
or a quaternion polynomial $D \in \H[t]$. Depending on the application one has
in mind, it might be possible to transform an unbounded motion polynomial into a
bounded motion polynomial. We may, for example substitute a rational expression
$A/B$ with $A$, $B \in \R[t]$ for the indeterminate $t$ in $C$ and try to factor
$B^{\deg C}C(A/B)$ instead. This amounts to a not necessarily invertible
re-parameterization of the motion. In particular, it is possible to parameterize
only one part of the original motion and transform $C$ to a bounded motion
polynomial.

However, there is a dense set of unbounded motions polynomials that admit a
factorization:

\begin{thm}
  \label{th:unbounded}
  If an unbounded motion polynomial $C = P + \eps Q \in \DH[t]$ with $P$, $Q \in
  \H[t]$ is such that all linear real factors of $\mrpf P$ have multiplicity
  one, there exists a real polynomial $D \in \R[t]$ such that $CD$ admits a
  factorization with linear motion polynomial factors.
\end{thm}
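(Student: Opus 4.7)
The plan is to induct on the number $k$ of real linear factors of $\mrpf P$, which are all simple by hypothesis. For the base case $k = 0$ the polynomial $C$ is bounded, and the bounded factorization theorem recalled in Section~\ref{sec:non-generic} produces a real polynomial $D$ such that $CD$ decomposes into linear motion polynomial factors. In the inductive step I would peel off one real linear factor of $\mrpf P$ at a time by splitting off a suitable linear motion polynomial right factor $t-h$ of $C$, then apply the inductive hypothesis to $C' \coloneqq \rquo(C,t-h)$.

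Concretely, pick a simple real zero $\alpha$ of $\mrpf P$, write $P = (t-\alpha)P_0$ with $P_0 \in \H[t]$ and $P_0(\alpha) \neq 0$, and set $h \coloneqq \alpha + \eps b$ with the candidate $b \coloneqq -P_0(\alpha)^{-1}Q(\alpha) \in \H$. Exploiting the identity $(\alpha+\eps b)^j = \alpha^j + j\alpha^{j-1}\eps b$ (because $(\eps b)^2 = 0$), a short computation gives $C(h) = \eps\bigl(P_0(\alpha)b + Q(\alpha)\bigr) = 0$, so Theorem~\ref{th:zero-factor} yields $C = C'(t-h)$ for some $C' \in \DH[t]$. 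Matching primal parts forces the primal part of $C'$ to be $P_0$, and from $\nu(C) = \nu(C')\nu(t-h)$ one reads off that $C'$ is again a motion polynomial once $\nu(t-h) \in \R[t]$ has been established.

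The main obstacle is to verify that $t-h$ itself is a motion polynomial, equivalently that $b$ is a pure quaternion so that $\nu(t-h) = (t-\alpha)^2$. The plan here is to differentiate the motion polynomial identity $P\Cj{Q} + Q\Cj{P} = 0$ and evaluate at $\alpha$; using $P(\alpha) = 0$ and $P'(\alpha) = P_0(\alpha)$, the surviving terms collapse to $P_0(\alpha)\Cj{Q(\alpha)} + Q(\alpha)\Cj{P_0(\alpha)} = 0$. Sandwiching $b + \Cj{b}$ between $P_0(\alpha)$ on the left and $\Cj{P_0(\alpha)}$ on the right reproduces exactly the negative of this vanishing expression, so $b + \Cj{b} = 0$ and $b$ is pure as desired.

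For the final inductive step one checks that $\mrpf P_0$ inherits precisely the simple real linear factors of $\mrpf P$ other than $t-\alpha$: the choice of $\alpha$ excludes $t-\alpha$ because $P_0(\alpha) \neq 0$, and a multiplicity jump of some other $t-\beta$ in $P_0$ would reinstate higher multiplicity in $\mrpf P$, contrary to hypothesis. The inductive hypothesis then provides $D \in \R[t]$ with $C'D$ admitting the desired factorization. Since $D$ has real (hence central) coefficients it commutes with $t-h$, so $CD = C'(t-h)D = (C'D)(t-h)$ is a factorization of $CD$ into linear motion polynomial factors.
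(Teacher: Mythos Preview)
Your argument is correct and follows the same inductive skeleton as the paper: peel off one simple real root $\alpha$ of $\mrpf P$ at a time by producing a linear motion polynomial right factor $t-h$, then invoke the bounded case once all real roots are gone. The difference lies in how the factor $t-h$ is obtained and certified. The paper sets $M=(t-\alpha)^2$, checks that $M$ is not a pseudofactor of $C$, applies Lemma~\ref{lem:3} to get $h=\czero(C,M)$, and relies on the general observation at the start of Section~\ref{sec:applicability} (namely $\nu(t-h)=M$) to conclude that $t-h$ is a motion polynomial. You instead write down $h=\alpha+\eps b$ with $b=-P_0(\alpha)^{-1}Q(\alpha)$ explicitly, verify $C(h)=0$ by hand, and establish that $b$ is pure via the derivative of the identity $P\Cj{Q}+Q\Cj{P}=0$ evaluated at $\alpha$. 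A short computation shows these two $h$'s coincide, so the routes converge; your version has the virtue of being entirely self-contained and bypassing the $\czero$ machinery, while the paper's version emphasizes that the step is just one iteration of Algorithm~\ref{alg:factorization} and hence fits into the general framework.
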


\begin{proof}
  We set $p' \coloneqq \mrpf P$ and denote by
  \begin{equation*}
    p = \prod_{i=1}^n(t-a_i),
    \quad
    a_1,a_2,\ldots,a_n \in \R.
  \end{equation*}
  the product of all monic
  linear real factors of $p'$. We then have $C\Cj{C} = p^2 U$ and $U \in \R[t]$
  has only irreducible quadratic real factors.

  We pick one linear factor of $p$, say $t-a_1$, and set $M \coloneqq
  (t-a_1)^2$. Because $\mrpf P$ has no linear real polynomial factor of
  multiplicity two, $M$ is not a pseudofactor of $C$ and Lemma~\ref{lem:3} can
  be applied to compute $h \coloneqq \czero(C,M)$. We now have $C =
  \tilde{C}(t-h)$ for some motion polynomial $\tilde{C}$ which is amenable to
  one further iteration of above construction (which is essentially one
  iteration of Algorithm~\ref{alg:factorization}). Treating all linear real
  factors of $p$ in like manner, we obtain a polynomial $H \in \DH[t]$ that
  admits a factorization with motion polynomial factors such that $C = C'H$ and
  $C' = P' + \eps Q'$ is bounded, that is, $\mrpf P'$ has no linear real factor.
  For bounded motion polynomials the statement is known to be true
  \cite[Theorem~1]{li18}. 
\end{proof}

There exist unbounded motion polynomials $C$ such that $CD$ does not admit a
factorization for all $D \in \H[t]$ (and in particular for real polynomials):

\begin{example}
  Consider the unbounded motion polynomial $C = (t-a_0)(t-a_1) + \eps\qi$ with
  $a_0 = a_1 = 0$ and a quaternion polynomial $D \in \H[t]$ with factorization
  $D = \prod_{i=2}^n (t-a_i)$ where $a_2$, $a_3$, \ldots, $a_m \in \H$. Then,
  the primal part of the product $CD$ has the factorization $\prod_{i=0}^n
  (t-a_i)$ and a suitable dual part exists if the system
  \begin{equation*}
    \qi D = \sum_{i=0}^n \Bigl(\prod_{j=0,\ j \neq i}^n(t-a_j)\Bigr) b_i
  \end{equation*}
  has a solution for $b_0$, $b_1$, \ldots, $b_n$. But this is not possible
  because the multiplicity of the factor $t$ on the right-hand side is always
  strictly larger than the multiplicity of this factor on the left-hand side.
\end{example}

\subsection{Factorization by Projection}
\label{sec:projection}

We conclude this text with a factorization technique applicable to non-motion
polynomials in $\DH$. Here, Algorithm~\ref{alg:factorization} fails already at
an early stage because the norm polynomial $\nu(C)$ is no longer real. More
generally, consider the Clifford algebra $\Clifford{p,q,1}$ and denote the basis
elements of $\R^n$ that square to $\pm 1$ by $e_1$, $e_2$, \ldots, $e_m$ where
$m = p + q$. There are $n = 2^m-1$ generators of $\Clifford{p,q,1}$ that are
products of above basis elements with non-zero square. We denote them by
$\qi_1$, $\qi_2$, \ldots, $\qi_n$ and we write $\eps$ for the generator that
squares to zero. Note that the real unit $1 = e_0$ has non-zero square as well.

Every element $c \in \Clifford{p,q,1}$ can be uniquely written as $c = a + b$
where $a \in \langle 1, \qi_1, \qi_2, \ldots, \qi_n \rangle$ and $b \in \langle
\eps,\qi_1\eps,\qi_2\eps,\ldots,\qi_n\eps \rangle$. In the context of dual
quaternions, $a$ is called the \emph{primal part} and $b$ is called the
\emph{dual part} and we use these notions here as well. A polynomial $C \in
\Clifford{p,q,1}$ has a unique representation as $C = A + B$ where $A$ is a
polynomial whose coefficients have zero dual part and $B$ is a polynomial whose
coefficients have zero primal part. We call $A$ and $B$, \emph{primal part} and
\emph{dual part,} respectively, of~$C$.

Assume now that the primal part of the monic polynomial $C$ admits a
factorization in $\Clifford{p,q,0}$, that is, $A = (t-a_1)(t-a_2) \cdots
(t-a_n)$ with $a_1$, $a_2$, \ldots, $a_n \in \Clifford{p,q,0}$. We make the
ansatz
\begin{equation}
  \label{eq:9}
  C = (t-a_1-b_1)(t-a_2-b_2) \cdots (t-a_n-b_n)
\end{equation}
with yet undetermined coefficients $b_1$, $b_2$, \ldots, $b_n$ of vanishing
primal part. Comparing coefficients on both sides of \eqref{eq:9} yields a
system of linear equations for the unknown real coefficients of $b_1$, $b_2$,
\ldots, $b_n$. The number of equations and the number of unknowns both equal
$(m+1)n$. Thus we can state:

\emph{If the primal part of a monic polynomial $C \in \Clifford{p,q,1}$ admits a
  factorization, a factorization of $C$ exists if the system of $(m+1)n$ linear
  equations in the same number of unknowns arising from comparing coefficients
  of \eqref{eq:9} has solutions.}

Generically, the solution to the linear system is unique but we already
encountered cases with infinitely many solutions or with no solution at all
(Examples~\ref{ex:no-factorization} and \ref{ex:5}). The algebra and geometry of
factorization of non-motion polynomials in $\DH[t]$ (and in particular a
kinematic interpretation) occurs in the theses \cite{scharler17,rad18} but
numerous open issues remain. In particular, sufficient criteria for existence of
factorizations, that is, solvability of the system of linear equations arising
from \eqref{eq:9}, would be desirable. While the factorization of motion
polynomials gives rise to a decomposition of rational motions into a sequence of
rotations, factorization of non-motion polynomials in $\DH[t]$ has in
interpretation as decomposition into so-called \emph{vertical Darboux motions}
\cite{rad18}.

\section*{Acknowledgment}

Daniel Scharler and Hans-Peter Schröcker were supported by the Austrian Science
Fund (FWF): P\;31061 (The Algebra of Motions in 3-Space). The authors gratefully
acknowledge useful comments and suggestions by anonymous reviewers that helped
to improve this text. All authors thank the Erwin Schr\"odinger International
Institute for Mathematics and Physics for its hospitality when completing the
last revision of this paper.
 
\bibliographystyle{elsarticle-num}

\begin{thebibliography}{10}
\expandafter\ifx\csname url\endcsname\relax
  \def\url#1{\texttt{#1}}\fi
\expandafter\ifx\csname urlprefix\endcsname\relax\def\urlprefix{URL }\fi
\expandafter\ifx\csname href\endcsname\relax
  \def\href#1#2{#2} \def\path#1{#1}\fi

\bibitem{niven41}
I.~Niven, Equations in quaternions, Amer. Math. Monthly 48~(10) (1941)
  654--661.

\bibitem{gordon65}
B.~Gordon, T.~S. Motzkin, On the zeros of polynomials over division rings,
  Trans. Amer. Math. Soc. 116 (1965) 218--226.

\bibitem{hegedus13}
G.~Heged\"us, J.~Schicho, H.-P. Schr\"ocker, Factorization of rational curves
  in the {Study} quadric and revolute linkages, Mech. Machine Theory 69~(1)
  (2013) 142--152.
\newblock \href {http://arxiv.org/abs/1202.0139} {\path{arXiv:1202.0139}}.

\bibitem{li15}
Z.~Li, T.-D. Rad, J.~Schicho, H.-P. Schröcker, Factorization of rational
  motions: A survey with examples and applications, in: S.-H. Chang (Ed.),
  Proceedings of the 14th IFToMM World Congress, 2015.

\bibitem{li15darboux}
Z.~Li, J.~Schicho, H.-P. Schr\"ocker, {7R Darboux} linkages by factorization of
  motion polynomials, in: S.-H. Chang (Ed.), Proceedings of the 14th IFToMM
  World Congress, 2015.

\bibitem{li2016spatial}
Z.~Li, J.~Schicho, H.-P. Schr\"ocker, Spatial straight-line linkages by
  factorization of motion polynomials, J. Mechanisms Robotics 8~(2) (2016)
  021002.

\bibitem{gallet16}
M.~Gallet, C.~Koutschan, Z.~Li, G.~Regensburger, J.~Schicho, N.~Villamizar,
  Planar linkages following a prescribed motion, Math. Comp. 87 (2017)
  473--506.

\bibitem{li17}
Z.~Li, J.~Schicho, H.-P. Schr\"ocker, Kempe's universality theorem for rational
  space curves, Found. Comput. Math.\href {http://arxiv.org/abs/1509.08690}
  {\path{arXiv:1509.08690}}.

\bibitem{scharler17}
D.~Scharler, Characterization of lines in the extended kinematic image space,
  Master thesis, University of Innsbruck (2017).

\bibitem{li18}
Z.~Li, J.~Schicho, H.-P. Schr\"ocker, Factorization of motion polynomials,
  Accepted for publication in J. Symbolic Comp. (2018).
\newblock \href {http://arxiv.org/abs/1502.07600} {\path{arXiv:1502.07600}}.

\bibitem{rad18}
T.-D. Rad, Factorization of motion polynomials and its application in mechanism
  science, Phd thesis, University of Innsbruck (2018).

\bibitem{ore33}
O.~Ore, Theory of non-commutative polynomials, Annh. of Math. (2) 34~(3) (1933)
  480--508.

\bibitem{palais68}
R.~S. Palais, The classification of real division algebras, Amer. Math. Monthly
  75~(4) (1968) 366--368.

\bibitem{qrgcd04}
R.~M. Corless, S.~M. Watt, L.~Zhi, {QR} factoring to compute the {GCD} of
  univariate approximate polynomials, IEEE Trans. Signal Process. 52~(12)
  (2004) 3394--3402.

\bibitem{kaltofen06}
E.~Kaltofen, Z.~Yang, L.~Zhi, Approximate greatest common divisors of several
  polynomials with linearly constrained coefficients and singular polynomials,
  in: J.-G. Dumas (Ed.), Proceedings of the 2006 International Symposium on
  Symbolic and Algebraic Computation, ISSAC '06, ACM, Genoa, Italy, 2006, pp.
  169--176.
\newblock \href {https://doi.org/10.1145/1145768.1145799}
  {\path{doi:10.1145/1145768.1145799}}.

\bibitem{bini2012polynomial}
D.~Bini, V.~Y. Pan, Polynomial and Matrix Computations: Fundamental Algorithms,
  Springer Science \& Business Media, 2012.

\bibitem{pan2001gcd}
V.~Y. Pan, Computation of approximate polynomial {GCDs} and an extension,
  Inform. and Comput. 167~(2) (2001) 71--85.

\bibitem{li2010blind}
Z.~Li, Z.~Yang, L.~Zhi, Blind image deconvolution via fast approximate {GCD},
  in: Proceedings of the 2010 International Symposium on Symbolic and Algebraic
  Computation, ACM, 2010, pp. 155--162.

\bibitem{klawitter15}
D.~Klawitter, Clifford Algebras. Geometric Modelling and Chain Geometries with
  Application in Kinematics, Springer Spektrum, 2015.

\bibitem{selig05}
J.~Selig, Geometric Fundamentals of Robotics, 2nd Edition, Monographs in
  Computer Science, Springer, 2005.

\bibitem{pottmann10}
H.~Pottmann, J.~Wallner, Computational Line Geometry, Mathematics and
  Visualization, Springer, 2010, 2nd printing.

\bibitem{wildberger13}
N.~Wildberger, Universal hyperbolic geometry {I: Trigonometry}, Geom. Dedicata
  163 (2013) 215--274.

\bibitem{li18c}
Z.~Li, J.~Schicho, H.-P. Schröcker, The geometry of quadratic quaternion
  polynomials in {Euclidean} and non-{Euclidean} planes, in: L.~Cocchiarella
  (Ed.), ICGG 2018 - Proceedings of the 18th International Conference on
  Geometry and Graphics, Springer International Publishing, Cham, 2019, pp.
  298--309.
\newblock \href {http://arxiv.org/abs/1805.03539} {\path{arXiv:1805.03539}}.

\bibitem{hegedus15}
G.~Heged\"us, J.~Schicho, H.-P. Schr\"ocker, Four-pose synthesis of
  angle-symmetric {6R} linkages, J. Mechanisms Robotics 7~(4).

\end{thebibliography}

\end{document}